\newtheorem{theorem}{Theorem}[section]
\newtheorem{lemma}[theorem]{Lemma}
\newtheorem{corollary}[theorem]{Corollary}
\newtheorem{proposition}[theorem]{Proposition}
\newtheorem{definition}[theorem]{Definition}
\newtheorem{example}[theorem]{Example}
\newtheorem{remark}[theorem]{Remark}
\newcommand{\C}{\mathbb C}
\newcommand{\R}{\mathbb R}
\newcommand{\Z}{\mathbb Z}
\newcommand{\tpitchfork}{%
  \vbox{
    \baselineskip\z@skip
    \lineskip-.52ex
    \lineskiplimit\maxdimen
    \m@th
    \ialign{##\crcr\hidewidth\smash{$-$}\hidewidth\crcr$\pitchfork$\crcr}
  }%
}
\newcommand{\dist}{\operatorname{dist}}
\newcommand{\Grad}{\operatorname{Grad}}
\begin{document}

\title[ Topological classification and finite determinacy of knotted maps.
 ]{Topological classification and finite determinacy of knotted maps}

\author{ Juan J. Nu\~no-Ballesteros}

\address{Departament de Matem\`atiques,
Universitat de Val\`encia, Campus de Burjassot, 46100 Burjassot,
Spain}

\email{Juan.Nuno@uv.es}

\author{ Rodrigo Mendes}

\address{Instituto de Ci\^encias exatas e da natureza, Universidade de Integra\c{c}\~ao Internacional da Lusofonia Afro-Brasileira (UNILAB), Campus dos Palmares, Cep. 62785-000. Acarape-Ce,
Brasil}

 \email{rodrigomendes@unilab.edu.br}

\keywords{Lojasiewicz exponent, knots, isolated singularity}
\subjclass[2010]{58K15; 14P25, 57M25}
\thanks{The first author has been partially supported by DGICYT Grant MTM2015--64013--P}

\begin{abstract}
 We show that the knot type of the link of a real analytic map germ with isolated singularity $f\colon(\mathbb{R}^2,0)\to(\mathbb{R}^4,0)$ is a complete invariant for $C^0$-$\mathscr A$-equivalence. Moreover, we also prove that isolated instability implies $C^0$-finite determinacy, giving an explicit estimate for its degree. For the general case of real analytic map germs, $f\colon (\mathbb{R}^n,0)  \rightarrow (\mathbb{R}^p,0)$  ($n \leq p$), we use the Lojasiewicz exponent associated to the Mond's  double point ideal $I^2(f)$ to obtain some criteria of Lipschitz and analytic regularity.
\end{abstract}

\maketitle

\section{Introduction}

In a previous paper \cite{MB}, we consider analytic map germs $f\colon(\mathbb{R}^2,0)\rightarrow (\mathbb{R}^4,0)$ with isolated singularity. This means that there exists a representative $f:\mathcal{U}\rightarrow \mathcal{V}$ such that $f$ is a topological embedding on $\mathcal{U}$ and an immersion on $\mathcal{U}\setminus \{0\}$. In particular, its image $X = f(\mathcal{U})$ is a surface with isolated singularity surface at the origin in $\mathbb{R}^4$. By the cone structure theorem, the topological type of the germ $f$ is determined by the knot type of its link $K(f) = X \cap \mathbb{S}^3_\epsilon$ (where $\epsilon>0$ is a Milnor-Fukuda radius of $f$). In the first part of this paper, we show the converse of this, namely, that the knot type of the link $K(f)$ is a topological invariant. 

In the second part of the paper, we consider the general case of analytic map germs $f\colon(\mathbb{R}^n,0)\rightarrow (\mathbb{R}^p,0)$ with isolated singularity ($n\le p$). We define a pair of invariants which are Lojasiewicz exponents associated to the Mond ideal $I^2(f)$. Recall that there exists a $p\times n$-matrix $\alpha$ with entries in $\mathscr E_{2n}$ such that
\[
f(x')-f(x)=\alpha(x,x')(x'-x),
\]
and that the ideal $I^2(f)\subset \mathscr E_{2n}$ generated by the functions $f_i(y)-f_i(x)$, with $i=1,\dots,p$, and by the $n\times n$-minors of $\alpha$ (here $\mathscr E_{2n}$ is the local ring of analytic function germs from $(\R^n\times\R^n,0)$ to $\R$). 

The first invariant $\mathcal{L}_0(\tilde{\Delta}f)$ is called the isolated singularity exponent and is defined as the Lojasiewicz exponent of $I^2(f)$ with respect to the maximal ideal $\mathcal M_{2n}$. It has the property that $\mathcal{L}_0(\tilde{\Delta}f)<\infty$ if and only if $f$ has isolated singularity. We use this invariant to prove that if $f\colon(\mathbb{R}^2,0)\rightarrow (\mathbb{R}^4,0)$ has isolated singularity, then $f$ is $C^0$-finitely determined. In fact, we give an explicit estimate for the degree of $C^0$-determinacy. It is well known that any finitely determined germ has isolated instability (and in particular, isolated singularity when $n=2$ and $p=4$), by the Mather-Gaffney criterion (see \cite{Wall}). But the converse is not true in the real analytic case. A natural open question is if isolated instability implies $C^0$-finite determinacy. This is known to be true for function germs (see \cite{Kuo}) and here we answer this question in the case $n=2$ and $p=4$. For the general case of $C^\infty$ map germs $f\colon(\mathbb{R}^n,0)\rightarrow (\mathbb{R}^p,0)$, we refer to \cite{BIW}, where they give a sufficient condition for $C^0$-finite determinacy in terms of some Lojasiewicz inequalities in the jet space.

The second invariant $\mathcal{L}_0({\Delta}f)$ is called the double point exponent and is defined as the Lojasiewicz exponent of the ideal generated by $f_i(y)-f_i(x)$, $i=1,\dots,p$, with respect to the ideal generated by $x'_j-x_j$, $j=1,\dots,n$. In this case, we have that $\mathcal{L}_0({\Delta}f)<\infty$ if and only if $f$ is injective. We show that this a bi-Lipschitz invariant of $f$ and we use it to prove that $f$ is a bi-Lipschitz embedding if and only if it is a smooth embedding. This result could be seen as a weaker real version of a theorem by Birbrair, L\^e, Fernandes and Sampaio \cite{BFLS}, where they show that if a complex algebraic set $X\subset\C^n$ is Lipschitz regular at a point $x_0\in X$, then $X$ is smooth at $x_0$.

%
%
%
%The set of analytic maps germs $f\colon(\mathbb{R}^n,0)\rightarrow (\mathbb{R}^p,0)$ with isolated instability at $0$ contains strictly all the finitely determined map germs. Its consequence of the Mather-Gaffney criterion (see \cite{Wall}). Geometrically, the isolated instability at $0$, in dimensions $n = 2$ and $p = 4$, means that there exists a representative $f:\mathcal{U}\rightarrow \mathcal{V}$ such that $f$ is a topological embedding on $\mathcal{U}$ and an immersion on $\mathcal{U}\setminus \{0\}$. On the other hand, $X = f(\mathcal{U})$ is an isolated singularity surface in $\mathbb{R}^4$. In order to investigate the topological type of these analytic map germs, it seems natural to consider the class of the finitely determined map germs or, more generally, the maps with isolated instability. In \cite{MB}, the authors consider the study of the maps germs with isolated instability at $0$ and show that the knot type given by the link $K(f) = X \cap \mathbb{S}^3_\epsilon$ (where $\epsilon\leq \epsilon_0, \ \epsilon_0$ is a Milnor-Fukuda radius of $f$) determine, besides the topological type of the germ $X$, the topological orbit of the map $f$ in the following sense:
%\begin{center}
%\emph{If $K(f)$ and $K(g)$ are equivalent as knots, then $f$ and $g$ are topologically equivalent, i.e., there is homeomorphisms $\psi\colon(\mathbb{R}^4,0) \rightarrow (\mathbb{R}^4,0)$ and $\phi\colon(\mathbb{R}^2,0)\rightarrow (\mathbb{R}^2,0)$ such that $g \circ \phi= \psi \circ f$.}
%\end{center}

\section{Invertible cobordisms of knots}

In this section, we show that two knots which are invertible cobordant from both ends are equivalent. We
first recall the notion of peripheral structure of a knot. Along all the paper, a knot $K \subset \mathbb{S}^3=\{x \in \mathbb{R}^4; \|x\|=1\}$ is always a tame knot, unless otherwise stated. Two knots $K_1, K_2$ are said equivalent if there exist a homeomorphism $\phi:\mathbb{S}^3 \rightarrow \mathbb{S}^3$ such that $\phi(K_1)=K_2$.

\begin{definition} \emph{Let $K \subset \mathbb{S}^3$ be a knot. Consider $N(K) \subset \mathbb{S}^3$ a tubular neighbourhood of $K$, so $N(K)$ is a smooth submanifold homeomorphic to $K \times\mathbb{B}_2$. A \emph{meridian} of $K$ is a simple closed curve $m$ contained in $\partial N(K)$ such that $m$ is not homotopically trivial in $\partial N(K)$, but is homologically trivial in $N(K)$. Analogously, a \emph{longitude} of $K$ is a simple closed curve $l$ contained in $\partial N(K)$ such that $l$ is homologous to $K$ in $N(K)$. We say that $(m,l)$ is a \emph{meridian-longitude pair} of $K$.
}
\end{definition}

\begin{definition}
\emph{Let $K_1, K_2 \subset \mathbb{S}^3$ be two knots and consider a meridian-longitude pair $(m_i,l_i)$ of each $K_i$ contained in  $\partial N(K_i)$, $i=1,2$. Let $\langle[m_i],[l_i]\rangle$ be the subgroup generated by their  classes in $G_i = \pi_1(\mathbb{S}^3\setminus \mathring{N}(K_i))$. Suppose that there is an isomorphism  $\varphi_*:G_1 \rightarrow G_2$. We say that the isomorphism $\varphi_*$ \emph{preserves the peripheral structure} when $\varphi_*(\langle[m_1],[l_1]\rangle)$ is conjugate to a subgroup of $\langle[m_2],[l_2]\rangle$ in $G_2$.}
\end{definition}

The following result is a consequence of the works of Dehn \cite{Dehn}, Waldhausen \cite{wald} and Gordon and Luecke \cite{GL}. It means that the knot group plus the peripheral structure information provide a complete invariant of the knot.

\begin{theorem}\label{peripheralstructure}
Let $K_1, \ K_2 \subset \mathbb{S}^3$ be two knots such that their knot groups $G_1$ and $G_2$ are isomorphic. If the isomorphism $\varphi_*:G_1 \rightarrow G_2$ preserves the peripheral structure then $K_1$ and $K_2$ are equivalent.
\end{theorem}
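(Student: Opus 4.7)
The plan is to deduce the statement from three classical ingredients: the incompressibility of the boundary torus of a knot exterior, Waldhausen's theorem for Haken $3$-manifolds, and the Gordon--Luecke theorem that knots in $\mathbb{S}^3$ are determined by their complements.

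First, I set up the knot exteriors $M_i = \mathbb{S}^3 \setminus \mathring{N}(K_i)$, which are compact orientable $3$-manifolds with toral boundary $T_i = \partial N(K_i)$ and with $\pi_1(M_i) = G_i$. The peripheral subgroup $P_i = \langle [m_i],[l_i]\rangle$ is the image of $\pi_1(T_i) \cong \Z^2$ in $G_i$. I check that each $M_i$ is irreducible (since $\mathbb{S}^3$ is irreducible by the Alexander--Schoenflies theorem) and that $T_i$ is incompressible in $M_i$: a compressing disk would, via the loop theorem, yield a meridional disk for $K_i$ inside $\mathbb{S}^3$, forcing $K_i$ to be the unknot, in which case $M_i$ is a solid torus and the statement below is immediate. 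In every case $M_i$ is a Haken manifold.

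Second, I upgrade the one-sided peripheral hypothesis to a bijective correspondence of peripheral subgroups. For a nontrivial knot, $P_i$ is a maximal abelian subgroup of rank $2$ in $G_i$ (an old observation of Dehn, made rigorous through the work of Papakyriakopoulos), and in particular any $\Z^2$-subgroup of $G_i$ is conjugate into $P_i$. Since $\varphi_*$ is a group isomorphism, $\varphi_*(P_1)\cong \Z^2$ must be conjugate to $P_2$ itself rather than to a proper subgroup, and the analogous statement for $\varphi_*^{-1}$ holds automatically. Thus $\varphi_*$ restricts to an isomorphism $P_1\to gP_2g^{-1}$ for a suitable $g\in G_2$.

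Third, I invoke Waldhausen's theorem, which applies because the $M_i$ are Haken with incompressible toral boundary: it produces a homeomorphism $\Phi: M_1\to M_2$ that realizes $\varphi_*$ up to an inner automorphism. Finally, by the Gordon--Luecke theorem, a homeomorphism between the knot exteriors forces $K_1$ and $K_2$ to be equivalent via a homeomorphism $\phi: \mathbb{S}^3\to \mathbb{S}^3$. The main obstacle is a conceptual one rather than a computational one: one must package the peripheral inclusion stated in the definition into the symmetric peripheral-preservation hypothesis required by Waldhausen's theorem, using the maximality of the peripheral subgroup among abelian subgroups of rank two. The final step, Gordon--Luecke, is enormously deep but is applied here as a black box.
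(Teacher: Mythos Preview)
Your proof is correct and follows essentially the same route as the paper's: handle the unknot case via Dehn's lemma, establish that the nontrivial knot exteriors are Haken with incompressible toral boundary, apply Waldhausen's theorem to obtain a homeomorphism of exteriors, and finish with Gordon--Luecke. You are in fact more careful than the paper on one point---explaining, via the maximality of the peripheral $\Z^2$ among abelian subgroups, why the one-sided inclusion in the definition upgrades to the symmetric peripheral correspondence Waldhausen requires---whereas the paper simply cites Waldhausen's Corollary~6.5 without comment.
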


\begin{proof}
If one of the knots is trivial, then $G_1\cong G_2 \cong\Z$ and by Dehn's Lemma \cite{Dehn}, the other knot is also trivial. Thus, we can assume that $K_1$ and $K_2$ are not trivial. Again by Dehn's Lemma, we obtain that the 3-manifolds $\mathbb{S}^3\setminus \mathring{N}(K_i)$ are sufficiently large, for $i = 1,2$. By Waldhausen theorem (see \cite[Corollary 6.5]{wald}), there exists a homeomorphism between $\mathbb{S}^3\setminus \mathring{N}(K_1)$ and $\mathbb{S}^3\setminus \mathring{N}(K_2)$, which can be extended to a homeomorphism between $\mathbb{S}^3\setminus K_1$ and $\mathbb{S}^3\setminus K_2$. So, the knots $K_1$ and $K_2$ are equivalent, by Gordon-Luecke theorem \cite{GL}.
\end{proof}

In a previous paper \cite{MB}, the authors used the notion cobordism of knots associated to two $C^0$-$\mathscr A$-equivalent map germs, but here we are interested in an equivalence which is a little bit stronger: the notion of invertible cobordism from both ends. This appears in a more general context in the works of Stallings  \cite{Sta} and Siebenmann \cite{Sie}. We recall the definition in the case of knots.

\begin{definition} % maybe reference
\emph{Two knots $K_1$, $K_2 \subset \mathbb{S}^3$ are called \emph{invertible cobordant} from end $K_2$ if there exist cobordisms $(W_{12};K_1,K_2)$ and $(W_{21};K_2,K_1)$ such that $(W_{12}\cup W_{21};K_1,K_1)$ is homeomorphic to the product cobordism $(K_1 \times  [0,1],K_1,K_1)$. In the same way, we can define invertible cobordism from end $K_1$. }
\end{definition}

\begin{proposition}\label{invertible}
If two knots $K_1$, $K_2 \subset \mathbb{S}^3$ are invertible cobordant from both ends, then $K_1$, $K_2$ are equivalent.
\end{proposition}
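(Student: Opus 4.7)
The plan is to reduce the proposition to Theorem \ref{peripheralstructure} by constructing, from the invertible cobordism data, an isomorphism $\varphi_*\colon G_1 \to G_2$ that preserves the peripheral structure. First, I would move from cobordisms of the knots to cobordisms of their exteriors: deleting open tubular neighborhoods of the cobordism surfaces $W_{12}$ and $W_{21}$ yields cobordisms $(V_{12}; E_1, E_2)$ and $(V_{21}; E_2, E_1)$ of the $3$-manifolds $E_i = \mathbb{S}^3 \setminus \mathring{N}(K_i)$, and the invertibility hypothesis carries over to $V_{12}\cup V_{21}\cong E_1\times[0,1]$ and $V_{21}\cup V_{12}\cong E_2\times[0,1]$, rel boundary.

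Next, I would extract the isomorphism $\varphi_*$. Since $V_{12}$ sits inside the product $E_1\times[0,1]$, the product projection restricts to a retraction $V_{12}\to E_1$, and the symmetric invertibility supplies a retraction $V_{12}\to E_2$. A van Kampen argument applied to the decomposition $V_{12}\cup V_{21}\cong E_1\times[0,1]$, combined with the analogous decomposition from the $E_2$ side, upgrades both retractions to homotopy equivalences, producing $\varphi_*\colon G_1\xrightarrow{\cong}\pi_1(V_{12})\xleftarrow{\cong} G_2$.

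Third, I would verify the preservation of peripheral structure by tracking a meridian-longitude pair $(m_1,l_1)\subset \partial E_1$. Under the product identification $V_{12}\cup V_{21}\cong E_1\times[0,1]$, the boundary torus $\partial E_1$ sweeps out an embedded cylinder $T^2\times[0,1]$ that meets $\partial E_2$ as an intermediate level torus. Following $m_1$ and $l_1$ across the sub-cylinder lying in $V_{12}$ presents them, up to free homotopy inside $V_{12}$, as curves on $\partial E_2$. Passing to $\pi_1$ then shows that $\varphi_*(\langle[m_1],[l_1]\rangle)$ is contained in a conjugate of $\langle[m_2],[l_2]\rangle$, and Theorem \ref{peripheralstructure} concludes that $K_1$ and $K_2$ are equivalent.

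The main obstacle is the third step. Producing the abstract isomorphism of knot groups is largely formal, but ensuring that the meridian and in particular the longitude are transported correctly across the cobordism is delicate: the annular collar on $\partial E_1$ supplied by the product structure must be shown to connect meridian to meridian and longitude to longitude, rather than permuting or twisting the two generators, and this is precisely where the full strength of the hypothesis of invertibility \emph{from both ends} (and not merely from one end) is needed.
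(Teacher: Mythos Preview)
Your overall strategy---reduce to Theorem~\ref{peripheralstructure} by producing a $\pi_1$-isomorphism that respects the peripheral subgroup---is exactly the paper's. The execution differs in two places. For the isomorphism, the paper does not build the exterior cobordisms $V_{12},V_{21}$ by hand nor invoke van~Kampen; it simply cites Sumners \cite[Proposition~2.1]{summers} to conclude that $\mathbb S^3\times[0,1]\setminus W$ is an $h$-cobordism between the two knot complements, so both inclusions are homotopy equivalences and $r_2\circ i_1$ induces the desired isomorphism. Your van~Kampen sketch is plausible but would need more care (the retractions alone give split injections on $\pi_1$, not automatically isomorphisms); the Sumners citation bypasses this. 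For the peripheral step, the paper argues homologically: it shows $r_2\circ i_1(m_1)$ is a meridian of $K_2$ by checking it is nontrivial in $\pi_1$ yet must be null-homologous in $N(K_2)$, and then the longitude falls out from the $\mathbb Z\oplus\mathbb Z$ structure. Your proposed geometric argument---push curves along the $T^2\times[0,1]$ that bounds a tubular neighbourhood of the annulus $W_{12}$---is a legitimate alternative and arguably more transparent; note, however, that this $T^2\times[0,1]$ lives in $\partial V_{12}$, not as a ``level torus'' in the product $E_1\times[0,1]$ as you phrased it.

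One correction to your closing worry: Definition~2.2 only asks that $\varphi_*(\langle[m_1],[l_1]\rangle)$ land inside a conjugate of $\langle[m_2],[l_2]\rangle$. You do not need to match meridian to meridian and longitude to longitude individually, so the ``permuting or twisting'' issue you flag is not an obstacle. Once your $T^2\times[0,1]$ carries $\partial E_1$ to $\partial E_2$ up to free homotopy in $V_{12}$, the peripheral condition is already satisfied and Theorem~\ref{peripheralstructure} applies.
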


\begin{proof}
Let $(W;K_1,K_2)$ be an invertible cobordism from both ends $K_1$ and $K_2$. By \cite[Proposition 2.1]{summers}, $\mathbb{S}^3 \times  [0,1]{\setminus}W$ provides an $h$-cobordism between $\mathbb{S}^3{\setminus}K_1$ and $\mathbb{S}^3{\setminus}K_2$. It means that the inclusions
\[
i_1:\mathbb{S}^3{\setminus}K_1 \to \mathbb{S}^3 \times  [0,1]{\setminus}W, \quad i_2:\mathbb{S}^3{\setminus}K_2 \to  \mathbb{S}^3 \times  [0,1]{\setminus}W
\]
are homotopy equivalences. Consider the retraction $r_2:\mathbb{S}^3 \times  [0,1]{\setminus}W \to  \mathbb{S}^3{\setminus}K_2$. Notice that $r_2 \circ i_1$ induces an isomorphism between the fundamental groups $\pi_1(\mathbb{S}^3{\setminus}K_1)$ and $\pi_1(\mathbb{S}^3{\setminus}K_2)$. If $K_1$ is a trivial knot, then $\pi_1(\mathbb{S}^3{\setminus}K_1)=\mathbb{Z}=\pi_1(\mathbb{S}^3{\setminus}K_2)$ and thus, $K_2$ is also a trivial knot. Hence, we may assume that $K_1$ is not a trivial knot. In this case, by Theorem \ref{peripheralstructure}, it is enough to show that the isomorphism $(r_2 \circ i_1)_*$ preserves the peripheral structure.

Let $m_1$ be a meridian of the knot $K_1$. We may suppose that $r_2 \circ i_1(m_1) \subset \partial N(K_2)$ for some solid torus $N(K_2)$ containing the knot $K_2$. By the isomorphism condition, $[r_2 \circ i_1(m_1)]$ is not zero in $\pi_1(\mathbb{S}^3{\setminus}K_2)$ because $[m_1]\neq 0$ in $\pi_1(\mathbb{S}^3{\setminus}K_1)$. Hence, it suffices to show that $r_2 \circ i_1(m_1)$  is homologically trivial in $N(K_2)$. Otherwise, we have that $[r_2 \circ i_1(m_1)]$ would be a multiple of $[K_2]$ in $H_1(N(K_2))\cong\mathbb{Z}$. Since $[K_2] = 0$ in $H_1(\mathbb{S}^3{\setminus}K_2)$, this would imply that $0 = [r_2 \circ i_1(m_1)] \in H_1(\mathbb{S}^3\setminus K_2))$, which is a contradiction (because the homology class is a homotopy invariant). Hence, $r_2\circ i_1(m_1)$ is a meridian of $K_2$.

Now, let $l_1$ be a longitude in $\partial N(K_1) \subset \mathbb{S}^3\setminus \mathring{N}(K_1)$. Consider $l_2=r_2(i_1(l_1))$. We may assume that $l_2 \subset \partial N(K_2)$ for some solid torus $N(K_2)$. Since $K_1$ is  not trivial, by Dehn's lemma, the inclusion $i:\partial N(K_1) \rightarrow \mathbb{S}^3{\setminus}\mathring{N}(K_1)$ induces a monomorphism ${i}_{*}:\pi_1(\partial N(K_1))=\mathbb{Z}\oplus \mathbb{Z} \rightarrow \pi_1(\mathbb{S}^3\setminus \mathring{N}(K_1))$, where the pair meridian-longitude $(m_1,l_1)$ of $K_1$ provide generators of $\mathbb{Z}\oplus \mathbb{Z}$. In particular, $l_1$ is homologous to the knot $K_1$ in $N(K_1)$. Now, we have that $\langle {r_2}_{*}([m_1],[l_1])\rangle=\mathbb{Z}\oplus \mathbb{Z}  \le \pi_1(\mathbb{S}^3\setminus \mathring{N}(K_2))$. Since ${r_2}_{*}(m_1)$ is a meridian, the other class ${r_2}_{*}(l_1)=[r_2(l_1)]$ is, necessarily, homologous to the knot $K_2$.
\end{proof}

\section{$C^0$-$\mathscr A$-equivalence and knot type}

In a previous paper \cite{MB}, we consider the $C^0$-$\mathscr A$-classification of analytic map germs $f\colon(\mathbb{R}^2,0)\rightarrow (\mathbb{R}^4,0)$ with isolated singularity. One of the main results of \cite{MB} is that two map germs are $C^0$-$\mathscr A$-equivalent if their links have the same knot type. In this section, we will prove that the converse is also true, that is, if two map germs are $C^0$-$\mathscr A$-equivalent, then their links are equivalent as knots.

\begin{definition}\label{iso-sing}
\emph{An analytic map germ $f\colon(\mathbb{R}^n,0)\rightarrow (\mathbb{R}^p,0)$ has \emph{isolated singularity at $0$} (or isolated instability at 0) when for some representative $f\colon\mathcal{U} \subset \mathbb{R}^n \rightarrow \mathcal{V} \subset \mathbb{R}^p$, $f$ is a immersion on $\mathcal{U}-\{0\}$ and a $C^0$-embedding on $\mathcal{U}$. In particular, its image $f(\mathcal{U})$ is a topological submanifold with isolated singularity at $0$.}
\end{definition}

Recall that two $C^\infty$ map germs $f,g\colon(\R^n,0)\to(\R^p,0)$ are said $\mathscr A$-equivalent if there exist diffeomorphisms $\phi,\psi$ such that $g=\psi\circ f\circ \phi^{-1}$. If $\phi,\psi$ are homeomorphisms instead of diffeomorphisms, then we say that $f,g$ are $C^0$-$\mathscr A$-equivalent.

Given $f\colon(\R^n,0)\to(\R^p,0)$ and a representative  $f\colon\mathcal U\to \mathcal V$  we denote:
\begin{align*}
&D^p_\epsilon=\{y\in\R^p: \|y\|^2\le\epsilon\},\quad S^{p-1}_\epsilon=\{y\in\R^p: \|y\|^2=\epsilon\},\\
&\tilde D^n_\epsilon=f^{-1}(D^p_\epsilon),\quad \tilde S^{n-1}_\epsilon=f^{-1}(S^{p-1}_\epsilon).
\end{align*}

The cone structure theorem is true for analytic map germs $f\colon(\mathbb{R}^n,0)\rightarrow (\mathbb{R}^p,0)$ with isolated instability. Here we state an adapted version for the case $n=2$ and $p=4$.

\begin{theorem}[Cone structure theorem]\label{conestruct} \cite{MB,fukuda}
Let $f\colon(\R^2,0)\to(\R^4,0)$ be an analytic map germ with isolated singularity. There exists $\epsilon_0>0$ such that for any $\epsilon$, with $0<\epsilon\le \epsilon_0$ we have:
\begin{enumerate}
\item $\tilde S^1_\epsilon$ is diffeomorphic to the circle $\mathbb S^1$.
\item $f|_{\tilde S^1_\epsilon}:\tilde S^1_\epsilon\to S^3_\epsilon$ is an embedding, whose $\mathscr A$-class is independent of $\epsilon$.
\item $f|_{\tilde D^2_\epsilon\setminus\{0\}}:\tilde D^2_\epsilon\setminus\{0\}\to D^4_\epsilon\setminus\{0\}$ is $\mathscr A$-equivalent to the product map $\text{\rm id}\times f|_{\tilde S^1_\epsilon}: (0,\epsilon]\times \tilde S^1_\epsilon\to  (0,\epsilon]\times S^3_\epsilon$.
\item $f|_{\tilde D^2_\epsilon}:\tilde D^2_\epsilon\to D^4_\epsilon$ is $C^0$-$\mathscr A$-equivalent to the cone of $f|_{\tilde S^1_\epsilon}:\tilde S^1_\epsilon\to S^3_\epsilon$.
\end{enumerate}
\end{theorem}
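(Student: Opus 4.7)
The plan is to reduce everything to the existence of a Milnor--Fukuda radius $\epsilon_0>0$ such that every sphere $S^3_\epsilon$ with $0<\epsilon\le\epsilon_0$ meets $f(\mathcal U\setminus\{0\})$ transversally. From such an $\epsilon_0$, parts (1) and (2) fall out of elementary differential topology, and (3)--(4) are obtained by integrating a carefully chosen vector field.

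To produce $\epsilon_0$, I would analyse the real analytic function $\sigma=\|f\|^2$. Away from the origin, $f$ is an immersion, so $x\in\mathcal U\setminus\{0\}$ is a critical point of $\sigma$ exactly when $df_x(T_x\R^2)\subset T_{f(x)}S^3_{\|f(x)\|}$, i.e.\ when $f$ is not transverse to the sphere through $f(x)$. Let $\Sigma$ be the semianalytic critical set of $\sigma|_{\mathcal U\setminus\{0\}}$; if $0\in\overline{\Sigma}$, the curve selection lemma yields an analytic arc $\gamma\colon[0,\delta)\to\mathcal U$ with $\gamma(0)=0$ and $\gamma(t)\in\Sigma$ for $t>0$. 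Then $(\sigma\circ\gamma)'(t)=d\sigma_{\gamma(t)}\gamma'(t)=0$ on $(0,\delta)$, so $\sigma\circ\gamma\equiv 0$ and $f\circ\gamma\equiv 0$. Injectivity of $f$ on $\mathcal U$ forces $\gamma\equiv 0$, a contradiction, so some $\epsilon_0>0$ has the required transversality property.

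For this $\epsilon_0$, $\tilde S^1_\epsilon=\sigma^{-1}(\epsilon)$ is a compact smooth $1$-manifold; the classical topological cone-structure theorem applied to the topological surface $f(\mathcal U)\subset\R^4$ with isolated singularity shows that $\tilde D^2_\epsilon$ is homeomorphic to a $2$-disk, so its boundary $\tilde S^1_\epsilon$ is a single circle, proving (1). For (2), $f|_{\tilde S^1_\epsilon}$ is a smooth injective immersion of a compact $1$-manifold, hence a smooth embedding, and varying $\epsilon\in(0,\epsilon_0]$ yields a smooth isotopy of such embeddings, so the $\mathscr A$-class is independent of $\epsilon$.

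For (3), I would construct two compatible ``radial'' vector fields. On $D^4_\epsilon\setminus\{0\}$ take $w(y)=y/(2\|y\|^2)$, which satisfies $d\rho\cdot w=1$ for $\rho(y)=\|y\|^2$. By the choice of $\epsilon_0$, $f$ is transverse to every sphere $S^3_\epsilon$, and the orthogonal decomposition $T_x\R^2=\ker d\sigma_x\oplus\R\cdot v(x)$ with respect to the pull-back metric produces a unique smooth vector field $v$ on $\tilde D^2_\epsilon\setminus\{0\}$ with $df\cdot v=w\circ f$ and $d\sigma\cdot v=1$. Integrating $w$ and $v$ provides diffeomorphisms $D^4_\epsilon\setminus\{0\}\cong(0,\epsilon]\times S^3_\epsilon$ and $\tilde D^2_\epsilon\setminus\{0\}\cong(0,\epsilon]\times\tilde S^1_\epsilon$ that intertwine $f$ with $\mathrm{id}\times f|_{\tilde S^1_\epsilon}$. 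Part (4) then follows by extending both homeomorphisms by $0\mapsto 0$; continuity at the origin reduces to a Lojasiewicz-type estimate comparing $\|x\|$ with $\sigma(x)$, guaranteeing that the forward orbits of $v$ converge to $0$ as the time parameter tends to $0$. The main obstacle I expect is this global, coherent lift from $w$ to $v$: while transversality makes the lift unique on each fibre $\tilde S^1_\epsilon$, verifying smoothness on the entire punctured neighbourhood and the correct behaviour of the orbits near $0$ is precisely where the isolated-singularity hypothesis, rather than mere local immersion, is used in an essential way.
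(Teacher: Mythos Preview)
The paper does not give its own proof of this theorem; it is quoted from \cite{MB,fukuda} with only the remark that ``the properties (1), (2), (3) and (4) are obtained from the condition of transversality between $f$ and $S^3_\epsilon$''. Your overall strategy---curve selection lemma to produce a Milnor--Fukuda radius, then integration of compatible radial vector fields---is precisely Fukuda's approach, and your curve-selection argument for $\epsilon_0$ in fact reappears almost verbatim later in the paper, in the proof of Proposition~\ref{regularvalue}.

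There is, however, a genuine error in your vector-field construction for (3). You fix the radial field $w(y)=y/(2\|y\|^2)$ on $D^4_\epsilon\setminus\{0\}$ and then assert that the orthogonal complement of $\ker d\sigma_x$ in the pull-back metric yields a $v$ with $df\cdot v=w\circ f$. The field $v$ you describe is well defined and satisfies $d\sigma\cdot v=1$, but the equality $df\cdot v=w\circ f$ fails in general: the vector $w(f(x))=f(x)/(2\|f(x)\|^2)$ need not lie in the $2$-plane $df_x(T_x\R^2)\subset\R^4$. Transversality of $f$ to $S^3_\epsilon$ only says that $df_x(T_x\R^2)$ is not contained in the tangent space of the sphere; it does \emph{not} say that this $2$-plane contains the radial direction. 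Without $df\cdot v=w\circ f$ the two flows do not intertwine $f$, and the conclusion of (3) does not follow.

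The fix is to reverse the order of construction. First choose any smooth $v$ on $\tilde D^2_\epsilon\setminus\{0\}$ with $d\sigma\cdot v=1$ (for instance $\nabla\sigma/\|\nabla\sigma\|^2$). Since $f$ restricted to $\tilde D^2_\epsilon\setminus\{0\}$ is an embedding onto a closed smooth surface in $D^4_\epsilon\setminus\{0\}$, the push-forward $df\cdot v$ is a smooth field along the image with $d\rho\cdot(df\cdot v)=1$; extend it to a smooth field $w$ on all of $D^4_\epsilon\setminus\{0\}$ still satisfying $d\rho\cdot w=1$ (tubular neighbourhood plus partition of unity). With this $w$, which is no longer the naive radial field, the flows of $v$ and $w$ genuinely commute with $f$, and your argument for (3) and (4) goes through.
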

We say that the number $\epsilon_0>0$, in Theorem \ref{conestruct}, is a \emph{Milnor-Fukuda radius} for $f$. Essentially, the properties (1), (2), (3)  and (4) are obtained from the condition of transversality between $f$ and $S^3_\epsilon$, for all $\epsilon$ such that $0<\epsilon\leq \epsilon_0$. Notice that all knots $K_\epsilon(f)=f(\tilde S^1_\epsilon) \subset S^3_\epsilon$ are equivalent, for $0<\epsilon<\epsilon_0$. The class of the knot $K_\epsilon(f)$ is denoted by $K(f)$.

\begin{theorem}\label{equivalence}
Let $f,g\colon(\mathbb{R}^2,0)\to(\mathbb{R}^4,0)$ be analytic map germs with isolated singularity. Then the following conditions are equivalent:
\begin{enumerate}
\item $f$ is $C^0$-$\mathscr A$-equivalent to $g$;
\item $K(f)$ and $K(g)$ are equivalent as knots.
\end{enumerate}
\end{theorem}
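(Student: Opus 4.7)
The implication (2) $\Rightarrow$ (1) is the main theorem of \cite{MB}, so my plan concentrates on the converse (1) $\Rightarrow$ (2). The strategy is to exhibit $K(f)$ and $K(g)$ as invertibly cobordant from both ends and invoke Proposition \ref{invertible}.

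Write the $C^0$-$\mathscr A$-equivalence as $\psi\circ f=g\circ\phi$, so that on small neighborhoods the image germs satisfy $X_g=\psi(X_f)$. Fix Milnor-Fukuda radii for $f$ and $g$. Since $\psi$ and $\psi^{-1}$ are continuous at the origin, an iterative nesting argument produces
\[
0<\epsilon_1<\epsilon_2,\qquad 0<\delta_1<\delta_2,
\]
that are Milnor-Fukuda radii for $f$ and $g$ respectively, with
\[
\psi(\mathring D^4_{\epsilon_1})\subset \mathring D^4_{\delta_1}\subset \psi(\mathring D^4_{\epsilon_2})\subset \mathring D^4_{\delta_2}.
\]
I would then slice $X_g$ into three adjacent cobordisms
\[
W_1=X_g\cap\bigl(D^4_{\delta_2}\setminus\psi(\mathring D^4_{\epsilon_2})\bigr),\ \
W_2=X_g\cap\bigl(\psi(D^4_{\epsilon_2})\setminus\mathring D^4_{\delta_1}\bigr),\ \
W_3=X_g\cap\bigl(D^4_{\delta_1}\setminus\psi(\mathring D^4_{\epsilon_1})\bigr).
\]
Each $W_i$ has boundary one link of $f$ and one link of $g$, lying either on a standard Euclidean 3-sphere or on $\psi(S^3_{\epsilon_j})$. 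Each of the latter, being the image of a standard 3-sphere under the ambient homeomorphism $\psi$, is locally flat in $\mathbb{R}^4$, so by Brown's topological Schoenflies theorem it bounds a 4-ball and is identified with $\mathbb S^3$. Under these identifications, together with the cone structure theorem and the fact that $\psi$ restricts to a homeomorphism $S^3_{\epsilon_j}\to\psi(S^3_{\epsilon_j})$ sending $K_{\epsilon_j}(f)$ to its image, every boundary circle is equivalent to $K(f)$ or to $K(g)$.

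Now item (3) of the cone structure theorem applied to $g$ identifies
\[
W_1\cup W_2\;=\;X_g\cap(D^4_{\delta_2}\setminus\mathring D^4_{\delta_1})
\]
with the product cobordism on $K(g)$; applied to $f$ (through the homeomorphism $\psi$) it identifies
\[
W_2\cup W_3\;=\;\psi\bigl(X_f\cap(D^4_{\epsilon_2}\setminus\mathring D^4_{\epsilon_1})\bigr)
\]
with the product cobordism on $K(f)$. Thus $(W_1,W_2)$ exhibits $K(f),K(g)$ as invertibly cobordant from end $K(f)$, and $(W_2,W_3)$ gives invertibility from end $K(g)$, so Proposition \ref{invertible} yields $K(f)\sim K(g)$.

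The main obstacle I expect is the gluing/identification step: one must verify that the two pieces $W_1$ and $W_2$, initially viewed in \emph{different} topological 3-spheres with \emph{different} homeomorphisms to the standard $\mathbb S^3$, really concatenate to a product cobordism $(K(g)\times[0,1],K(g),K(g))$ in the precise sense demanded by the definition of invertible cobordism. Making this rigorous requires collar-neighborhood theorems for locally flat codimension-one submanifolds, together with a consistent parameterization of the annular regions $D^4_{\delta_2}\setminus\mathring D^4_{\delta_1}$ and $D^4_{\epsilon_2}\setminus\mathring D^4_{\epsilon_1}$ supplied by the $\mathscr A$-triviality of $f$ and $g$ away from the origin.
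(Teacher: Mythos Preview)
Your approach is essentially the same as the paper's. Both proofs establish (1)$\Rightarrow$(2) by producing a cobordism between $K(g)$ and (the $\psi$-image of) $K(f)$ that is invertible from both ends, using nested balls alternating between Euclidean balls and $\psi$-images of Euclidean balls, and then invoking Proposition~\ref{invertible}. The paper takes three nested balls $\psi(D^4_{\epsilon_1})\subset D^4_\delta\subset \psi(D^4_{\epsilon_0})$, builds two cobordisms $W_{12},W_{21}$, checks that $W_{12}\cup W_{21}$ is trivial via item~(3) of the cone structure theorem applied to $f$, and then says the other end is handled ``in a similar way''. Your four-ball nesting simply makes both ends explicit at once, with $W_2$ serving as the common invertible cobordism; this is a harmless and slightly cleaner variant of the same construction.

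The technical concern you raise---that the individual shells such as $D^4_{\delta_2}\setminus\psi(\mathring D^4_{\epsilon_2})$ must be identified with $\mathbb S^3\times[0,1]$ in a way compatible with gluing---is present in the paper as well: it simply asserts that $\psi(D^4_{\epsilon_0})\setminus\mathring D^4_\delta$ is homeomorphic to $\mathbb S^3\times[0,1]$ without further comment. So you have not introduced any difficulty that the paper avoids. Note, however, that for the crucial step (triviality of the \emph{unions} $W_1\cup W_2$ and $W_2\cup W_3$) no Schoenflies or annulus-type argument is needed: the ambient regions are $D^4_{\delta_2}\setminus\mathring D^4_{\delta_1}$ and $\psi(D^4_{\epsilon_2}\setminus\mathring D^4_{\epsilon_1})$, each of which is manifestly $\mathbb S^3\times[0,1]$ (the first by radial projection, the second via $\psi^{-1}$ followed by radial projection), and item~(3) of Theorem~\ref{conestruct} gives the product structure of the pair directly.
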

\begin{proof} $(2)\Rightarrow (1)$: Let $\epsilon<\min\{\epsilon_1,\epsilon_2\}$, where $\epsilon_1$ is a Milnor-Fukuda radius for $f$ and $\epsilon_2$ is a Milnor-Fukuda radius for $g$. Since $K(f)$ and $K(g)$ are equivalent, there exists a homemomorphism $\phi_\epsilon: S^3_\epsilon \rightarrow S^3_\epsilon$ such that $\phi_\epsilon(K_\epsilon(f))=K_\epsilon(g)$. Let $\tilde S^1_\epsilon=f^{-1}(S^{3}_\epsilon)$ and $\tilde {\tilde {S}}^1_\epsilon=g^{-1}(S^{3}_\epsilon)$ and consider the map $\tilde \phi:\tilde S^1_\epsilon \rightarrow \tilde {\tilde {S}}^1_\epsilon $ given by $\tilde \phi_\epsilon=g^{-1} \circ \phi_\epsilon \circ f|_{\tilde S^1_\epsilon}$. Thus, we obtain $g|_{\tilde {\tilde {S}}^1_\epsilon}= \phi_\epsilon \circ f \circ \tilde \phi_\epsilon^{-1}$. Hence, $f|_{\tilde S^1_\epsilon}$ and $g|_{{\tilde {\tilde S}}^1_\epsilon}$ are $C^0{-}\mathscr A$-equivalent and, by condition (4) in Theorem \ref{conestruct}, it follows that $f$ is $C^0$-$\mathscr A$-equivalent to $g$.

\medskip
Proof that $(1) \Rightarrow (2)$:
Let $\psi\colon(\mathbb{R}^4,0) {\rightarrow }(\mathbb{R}^4,0)$ and $\phi\colon(\mathbb{R}^2,0)\to  (\mathbb{R}^2,0)$ be homeomorphisms such that $g=\psi \circ f \circ \phi^{-1}$. By \cite[proposition 2.8]{MB}, we have that $K(g)$ and $\psi(K(f))$ are cobordant. We will prove that $K(g)$ and $\psi(K(f))$ are in fact invertible cobordant from both ends and thus, $K(f)$ and $K(g)$ are equivalent by Proposition \ref{invertible}.

Let $K(f)=K_{\epsilon_0}(f)$, where $\epsilon_0$ is a Milnor-Fukuda radius of $f$. Let ${D}^4_\delta \subset \psi(\mathring D_{\epsilon_0}^4)$, where $\delta$ is a Milnor-Fukuda radius of $g$ and let $\epsilon_1<\epsilon_0$ such that $D^4_{\epsilon_1} \subset \psi^{-1}(\mathring{D}^4_\delta)$. Then,
\[
\phi(\tilde D^2_{\epsilon_1}) = \phi(f^{-1}(D^4_{\epsilon_1}))\subset\phi(f^{-1}(\psi^{-1}(\mathring D^4_\delta)))=g^{-1}(\mathring D^4_\delta)=\mathring{ \tilde D}^2_{\delta}.
\]
We know that $\phi(\tilde D^2_{\epsilon_0})-\mathring{\tilde D}_\delta^2$ is homeomorphic to $\mathbb{S}^1\times [0,1]$, $\psi(D_{\epsilon_0}^4) - \mathring{D}_\delta^4$ is homeomorphic to $\mathbb{S}^3\times [0,1]$ and that $g$ is an embedding from $\phi(\tilde D^2_{\epsilon_0})-\mathring{\tilde D}_\delta^2$ to $\psi(D_{\epsilon_0}^4) - \mathring{D}_\delta^4$. So, $W_{12}=g(\phi(\tilde D^2_{\epsilon_0})-\mathring{\tilde D}_\delta^2)$ defines a cobordism between $\psi(K(f))$ and $K(g)=K_\delta(g)$. Analogously,
$W_{21}=g(\tilde D_\delta^2-\phi(\mathring{\tilde D}^2_{\epsilon_1}))$ gives a cobordism between $K(g)$ and $\psi(K(f))$. The union $W_{12}\cup W_{21}$ is given by $g(\phi(\tilde D^2_{\epsilon_0} -\mathring{\tilde D}^2_{\epsilon_1}))=\psi(f(\tilde D^2_{\epsilon_0} -\mathring{\tilde D}^2_{\epsilon_1}))$, which is trivial by condition (3) of Theorem \ref{conestruct}. Thus, we have an invertible cobordism from end $\psi(K(f))$. The invertible cobordism from end $K(g)$ is obtained in a similar way.
\end{proof}

\section{The isolated singularity exponent}

Given an analytic map germ $f\colon(\mathbb{R}^n,0)\rightarrow (\mathbb{R}^p,0)$, we denote by $j^kf(0)$ its $k$-jet, that is, its Taylor expansion of order $k$. We recall that $f$ is called $k$-determined (resp. $C^0$-$k$-determined) if for any other map germ $g$ such that $j^kf(0)=j^kg(0)$, $g$ is $\mathscr A$-equivalent to $f$ (resp. $C^0$-$\mathscr A$-equivalent to $f$). One says that $f$ is finitely determined (resp. $C^0$-finitely determined) if it is $k$-determined (resp. $C^0$-$k$-determined) for some $k$.

The isolated singularity condition (in the sense of Definition \ref{iso-sing}) is a generic condition, not only for map germs $f\colon(\mathbb{R}^2,0) \rightarrow (\mathbb{R}^{4},0)$, but more generally for map germs $f\colon(\mathbb{R}^n,0) \rightarrow (\mathbb{R}^p,0)$, with $p\ge 2n$. 
In fact, if such a map germ is finitely determined, then, by the Mather-Gaffney finite determinacy criterion (see \cite{Wall}), $f$ has isolated instability. But when $p\ge 2n$, this means that $f$ is a $C^0$-embedding in $\mathcal{U}$ and a immersion on $\mathcal{U}-\{0\} \subset \mathbb{R}^n$, for some representative $f\colon \mathcal U\to\mathcal V$. In this case, the image of $f$ is a $n$-topological manifold with isolated singularity embedded in $\mathbb{R}^{p}$. In this section, we will use the Lojasiewicz exponent as an invariant which detects the isolated singularity condition of  a given analytic map germ $f\colon(\mathbb{R}^n,0) \rightarrow (\mathbb{R}^{p},0)$. In other words, the existence of this number guarantees that $M=f(\mathcal{U})\setminus \{0\}$ is an embedded smooth submanifold.
The Lojasiewicz exponent and inequalities are powerful tools  to investigate the topology and geometry of analytic maps and analytic sets. For instance, see the works \cite{BA, BR,JG,JEL}.

\medskip

Let $f\colon(\mathbb{R}^n,0) \rightarrow (\mathbb{R}^{p},0)$ be an analytic map germ ($n \leq p$). For each $i=1,\dots,p$ there exist analytic functions $\alpha_{ij}\colon (\mathbb{R}^{2n},0)\to\mathbb R$, with $j=1,\dots,n$ such that
\[
f_i(x')-f_i(x)=\sum_{j=1}^n \alpha_{ij}(x,x')(x_j'-x_j).
\]
Let $\alpha=(\alpha_{ij})$ be the $p\times n$ matrix obtained in this way and let
$D_1, D_2, \ldots, D_r$ be the minors of order $n$ of $\alpha$. We define the map germ $\tilde{\Delta}f\colon(\mathbb{R}^{2n},0)\rightarrow \mathbb{R}^{p+r}$ as
\[
\tilde{\Delta} f(x',x)=(f(x')-f(x),D_1(x,x'),\ldots,D_r(x,x')),
\]
where $r=$ ${p}\choose{n}$.
%The matrix $\alpha$ is not unique, but the ideal generated by the the components of $\tilde \Delta f$ is independent of the choice of $\alpha$ (see \cite{Mond}).

Let $\tilde{\alpha}_0$ be the Lojasiewicz exponent of the map $\tilde \Delta f$. That is, $\tilde{\alpha}_0$ is the infimum of the numbers $\alpha>0$ which satisty the Lojasiewicz inequality
\begin{equation}\label{loja3}
\|\tilde{\Delta} f(x',x)\| \geq C \dist((x',x),V(\tilde{\Delta} f))^{\alpha},
\end{equation}
for some real positive numbers $C,\epsilon$ and for all
$\|(x',x)\| < \epsilon$, where $V(\tilde{\Delta} f)$ is the zero locus of $\tilde{\Delta} f$.

\begin{remark}\label{MatrixMond}
\emph{
The matrix $\alpha$ is not unique, but it is easy to see that $V(\tilde \Delta f)$ does not depend of the choice of $\alpha$. In fact, if $x \neq x'$, then
$\tilde{\Delta} f(x',x)=0$ if and only if $f(x)=f(x')$. Otherwise, if $x=x'$, we have that $\alpha(x,x)=Df(x)$ (the Jacobian matrix of $f$ at $x$), hence
$\tilde{\Delta} f(x',x)=0$ if and only if $x$ is a non-immersive point of $f$. If $f$ is a $C^0$-embedding and an immersion outside of $0$, then $V(\tilde \Delta f)=\{0\}$. Moreover, the inequality (\ref{loja3}) has the following form:}
\begin{equation}\label{singnumber}
\|\tilde{\Delta} f(x',x)\| \geq C \|(x',x)\|^{\alpha}.
\end{equation}
\end{remark}

\begin{definition}
{\rm The number $\tilde \alpha_0$ is defined as the infimum of those $\alpha>0$ which satisfy the inequality (\ref{singnumber}) for some real positive numbers $C,\epsilon$ and for all  $\|(x',x)\| < \epsilon$. The number $\tilde \alpha_0$ is called the \emph{isolated singularity exponent} of $f$. We denote it by $\mathcal{L}_0(\tilde{\Delta}f)$.}
\end{definition}

The isolated singularity exponent $\alpha_0$ does not depend on the choice of the matrix $\alpha$. In fact, by \cite[Proposition 3.1]{M}, the ideal $I^2(f)$ generated in $\mathscr E_{2n}$ by the components of $\tilde \Delta f$ is independent of the choice of $\alpha$. Moreover, $\alpha_0$ is the Lojasiewicz exponent of the ideal $I^2(f)$ with respecto to the maximal ideal in $\mathscr E_{2n}$ and it is well known that the Lojasiewicz exponent is independent of the choice of the system of generators of the ideals.

\begin{remark}
\emph{
 By using the sum norm, we may write the inequality (\ref{singnumber}) as follows:
\[
 \frac{\|\tilde{\Delta} f(x',x)\|}{\|(x',x)\|^{\alpha}}=\frac{\sum_{j=1}^p|f_j(x')-f_j(x)|}{\|(x',x)\|^{\alpha}}+\frac{\sum_{i=1}^r|D_i(x,x')|}{\|(x',x)\|^{\alpha}} \geq C,
 \]
for some $C,\epsilon>0$ and for all $0<\|(x',x)\| < \epsilon$. Along the diagonal we have
\[
\frac{\|\tilde{\Delta} f(x,x)\|}{\|(x,x)\|^{\alpha}}=\frac{\sum_{i=1}^r|D_i(x,x)|}{\|(x,x)\|^{\alpha}}.
\]
So we need to control only of the term $\sum_{i=1}^r|D_i(x,x)|/\|(x,x)\|^{\alpha}$ in order to obtain $\mathcal{L}_0(\tilde{\Delta}f)$.}
 \end{remark}

\begin{proposition}\label{is}
An analytic map germ $f$ has isolated singularity at $0$ if and only if $\mathcal{L}_0(\tilde{\Delta}f)<\infty$.
\end{proposition}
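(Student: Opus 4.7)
The plan is to reduce the equivalence to the statement $V(\tilde{\Delta}f)=\{0\}$ as germs at $0$, and then to invoke the classical Lojasiewicz inequality for analytic map germs with isolated zero. Everything will hinge on the observation recorded in the preceding Remark, which identifies the zero set of $\tilde{\Delta}f$ with the locus of failure of injectivity and immersivity of $f$.

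First I would argue the following intermediate equivalence: there is a neighbourhood $U$ of $0\in\R^{2n}$ with $V(\tilde{\Delta}f)\cap U=\{0\}$ if and only if $f$ has isolated singularity at $0$. The ``only if'' direction uses the Remark twice: on the diagonal $x=x'$, we have $\tilde{\Delta}f(x,x)=0$ iff $Df(x)$ has rank less than $n$, so after shrinking $U$ we get that $f$ is an immersion off $\{0\}$ on some $\mathcal U\subset\R^n$; off the diagonal, we get that $f$ is injective on $\mathcal U$. After shrinking $\mathcal U$ to have compact closure, continuous injective maps from compact spaces to Hausdorff spaces are homeomorphisms onto their image, so $f|_{\mathcal U}$ is a $C^0$-embedding, giving isolated singularity. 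The converse is immediate: if $f$ is an injective immersion off $0$ on $\mathcal U$, then the only point $(x',x)\in \mathcal U\times \mathcal U$ where simultaneously $f(x')=f(x)$ and all $n\times n$-minors of $\alpha(x,x')$ can vanish is $(0,0)$.

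Second, the implication $(\Leftarrow)$ of the proposition is then essentially by definition: if $\mathcal L_0(\tilde{\Delta}f)$ is finite then for some $\alpha,C,\epsilon>0$ we have $\|\tilde{\Delta}f(x',x)\|\ge C\|(x',x)\|^\alpha$ on $\|(x',x)\|<\epsilon$, so $V(\tilde{\Delta}f)\cap B_\epsilon=\{0\}$ and the intermediate equivalence supplies isolated singularity.

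Finally, the implication $(\Rightarrow)$ is where the only real content lies. Assuming $f$ has isolated singularity, the intermediate equivalence gives $V(\tilde{\Delta}f)=\{0\}$ as germs at $0$. Since $\tilde{\Delta}f\colon(\R^{2n},0)\to(\R^{p+r},0)$ is a real analytic map germ whose zero set is $\{0\}$ locally, the classical Lojasiewicz inequality for analytic maps with an isolated zero (see e.g.\ the references already cited in the paper) produces constants $C,\alpha,\epsilon>0$ with $\|\tilde{\Delta}f(x',x)\|\ge C\|(x',x)\|^\alpha$ for $\|(x',x)\|<\epsilon$, i.e.\ $\mathcal L_0(\tilde{\Delta}f)\le \alpha<\infty$. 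The main obstacle, and really the only non-routine step, is this last invocation of the Lojasiewicz inequality; the rest of the argument is a matter of unwinding the Remark together with the standard fact that continuous injections on compact sets are topological embeddings.
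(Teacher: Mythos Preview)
Your proof is correct and follows essentially the same route as the paper's. The paper's argument is terser: for $(\Rightarrow)$ it simply says ``by construction'' (implicitly invoking, as you do explicitly, the classical Lojasiewicz inequality once $V(\tilde\Delta f)=\{0\}$ is known from the preceding Remark), and for $(\Leftarrow)$ it plugs a hypothetical double point or non-immersive point directly into the inequality to force it to the origin, which is exactly your intermediate equivalence unwound. Your version is more careful in two places---making the appeal to Lojasiewicz explicit, and justifying the passage from ``injective'' to ``$C^0$-embedding'' via compactness---but the underlying logic is identical.
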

\begin{proof}
If $f$ has isolated singularity, then $\mathcal{L}_0(\tilde{\Delta}f)<\infty$ by construction. Suppose now that $\mathcal{L}_0(\tilde{\Delta}f)=\tilde\alpha_0<\infty$. Let $f(x)=f(x')$. Then
\[
0=\|f(x')-f(x),D_1(x,x'),\ldots,D_r(x,x')\| \geq C\|(x,x')\|^{\alpha}\geq 0,
\]
so $x=x'=0$. Hence, $f$ is a $C^0$-embedding for some representative. Suppose now that $x$ is a non-immersive point of $f$. Taking $x=x'$ in the previous inequality, we have that $0=\|D_1(x,x),\ldots,D_r(x,x)\| \geq \|(x,x)\|^{\alpha}\geq 0$, Hence, $x=0$. Thus, $f$ has isolated singularity at $0$.
\end{proof}

  In \cite{MB}, we introduced the invariant
\[
\delta(f):=\dim_\mathbb{R}\frac{\mathscr E_4}{I^2(f)},
\]
where $\mathscr E_n$ is the local ring of analytic function germs $(\R^n,0)\to\R$. This number detects the finite determinacy property for the map germ $f$, that is, $f$ is finitely determined if and only if $\delta(f)<\infty$.  Since isolated singularity does not imply finite determinacy, it may happen that $\delta(f)=\infty$, but  $\mathcal{L}_0(\tilde{\Delta}f)<\infty$. For instance, consider the map germ $f\colon(\R^2,0)\to(\R^4,0)$ is given by $f(x,y)=(x,y^2,y(x^2+y^2),0)$.

\begin{remark}
{\rm If $f$ has corank $1$ at $0$, the isolated singularity exponent of $f$ have a more simple description. After analytic change of coordinates, we may write $f$ in the following form:
\[
f(z,y)=(z,\tilde f(z,y)),
\]
where $z \in \mathbb{R}^{n-1}$, $y \in \mathbb{R}$ and $\tilde f(z,y)=(f_n(z,y),\ldots,f_p(z,y))$. Then, the matrix $\alpha(x,x')$ satisfies $\alpha_{ij}(x,x')=1$ for $i=j\leq n$ and $\alpha_{ij}(x,x')=0$ for $i \neq j$, $i,j \leq n$. So, we have that $n-p+1$ minors of $\alpha(x,x')$ are given by $D_j(x,x')=\alpha_{nj}(x,x')=\alpha_{nj}(z,y,u,v), \ j \in \{n\ldots,p\}$. Notice that each $\alpha_{nj}(x,x')$ can be taken as a cofactor of the others minors. Thus, the ideal $I^2(f)$ is generated by $\alpha_{nj}(x,x')$, $j \in \{n,\ldots,p\}$. Now, if we take $z=u$ in $f(z,y)-f(u,v)$ we have
\[
\alpha_{nj}(z,y,z,v)=\frac{f_j(z,y)-f_j(z,v)}{y-v}.
\]
Hence, from the map germ $\tilde \Delta^1f\colon (\R^{n+1},0)\to\R^{p-n+1}$ defined as
\[
\tilde \Delta^1f(z,y,u)=\left(\frac{f_n(z,u)-f_n(z,y)}{u-y},\ldots, \frac{f_p(z,u)-f_p(z,y)}{u-y}\right),
\]
we may to obtain $\mathcal{L}_0(\tilde{\Delta}f)$ considering the infimum of the numbers $\alpha$ such that
\[
\frac{\|\tilde \Delta^1f(z,y,u)\|}{(|z|+|y|+|u|)^\alpha} \geq C,
\]
for some $\epsilon, C>0$ such that $\|z,y,u\|<\epsilon$.
}
\end{remark}

Let $F\colon(\mathbb{R}\times \mathbb{R}^n,0)\rightarrow (\mathbb{R}\times \mathbb{R}^p,0)$ be an analytic map given by $F(t,x)=(t,f_t(x))$, such that $f_t(0)=0$, for all $t$. We say that $F$ is a 1-parameter family.
We say that a 1-parameter family $F$ is \emph{uniformly injective} (resp. \emph{has uniform isolated singularity}) when there exists a representative $F\colon I \times \mathcal{U}\rightarrow I \times\mathbb{R}^p$ such that each $f_t$, $t \in I$, is injective (resp. is injective and immersion outside the origin) on $\mathcal{U}$.

%\medskip

\begin{example}\label{example}
{\rm Let $f_0\colon (\mathbb{R}^2,0)\rightarrow (\mathbb{R}^4,0)$ given by $f_0(x,y)=(x,y^2,y^3,x^3y)$. This is a map germ of type $II_3$ in the Klotz-Pop-Rieger list of $\mathscr A$-simple map-germs (see \cite{kpr}). Let us consider the family $F(t,(x,y))=(t,f_t(x,y))=(t,(x,y^2,y^3,x^3y+tx^2y))$ (an unfolding of $f_0$). This family is uniformly injective. Moreover, for each $t$, $f_t$ has isolated singularity for $\|(x,y)\|<t$. However, since $(-t,0)$ is a non-immersive point of $f_t$, the family $F$ does not have uniform isolated singularity. In order to obtain $\mathcal{L}_0(\tilde{\Delta}f_t)$, we consider
\[
\tilde{\Delta}^1{f_t}(x,y,u)=(u^2-y^2,u^3-y^3, x^3(y-u)+tx^2(u-y),u+y,u^2+uy+y^2,x^3+tx^2),
\]
hence
\begin{align*}
\frac{\|\tilde{\Delta}^1_{f_t}(x,y,u)\|}{(|x|+|y|+|u|)^{\alpha}}&=\frac{|u^2-y^2|}{(|x|+|y|+|u|)^{\alpha}}+\frac{|u^3-y^3|}{(|x|+|y|+|u|)^{\alpha}}+\frac{|x^3(y-u)+tx^2(u-y)|}{(|x|+|y|+|u|)^{\alpha}}\\&+\frac{|u+y|}{(|x|+|y|+|u|)^{\alpha}}
+\frac{|u^2+uy+y^2|}{(|x|+|y|+|u|)^{\alpha}}+\frac{|x^3+tx^2|}{(|x|+|y|+|u|)^{\alpha}}.
\end{align*}
From the last term in the right hand side of the equality, we conclude that $\mathcal{L}_0(\tilde{\Delta}f_t)=2$ when $t \neq 0$ and $\mathcal{L}_0(\tilde{\Delta}f_0)=3$, when $t=0$.}
\end{example}

\begin{example}[Finitely determined case]

{\rm Let $F(t,x)=(t,f_t(x))$ be a family such that $\delta(f_t)<\infty$ and $\mathcal{L}_0(\tilde{\Delta}f_t)$ is constant. In particular, each $f_t$ is a finitely determined map germ. The following example shows that these conditions are not enough  to ensure that $\delta$ is constant along a family:
\[
 f_t(x,y)=(x,y^2,y(x^2+y^2),y(x^4+y^6+ty^2)).
\]
We have, for each $t$, $\delta(f_t)<\infty$. Thus, for each $t$, $f_t$ is a finitely determined map germ and, hence, it has isolated singularity at $0$. Using the \texttt{Singular} software (see \cite{GP}), we obtain that $\delta(f_t)=2$, for $t \neq 0$ and $\delta(f_0)=4$. On the other hand, $\mathcal{L}_0(\tilde{\Delta}f_t)=2$. Moreover, this family has uniform isolated singularity, because $V(\tilde \Delta f_t)=\{0\}$, for all $t$.}
\end{example}

\begin{example}\label{counterexample2}
{\rm The constancy of the $\delta$-invariant along a family $F(t,x)=(t,f_t(x))$ implies that it has uniformily isolated singularity (see \cite[Lemma 4.2]{MB}). From this (and considering the previous example), it seems natural to ask if the constancy of $
\mathcal{L}_0(\tilde{\Delta}f_t)$ implies uniform isolated singularity. In fact, the question has negative answer as the following example shows:
\[
f_t(x,y)=(x,y^2,y(x^4+txy^2),y(y^4+txy^2)).
\]
We will prove that $\mathcal{L}_0(\tilde{\Delta}f_t)=4$, for all $t$.
 If $t=0$, then
 \[\frac{\|\tilde{\Delta}{f_0}(x,y,u)\|}{(|x|+|y|+|u|)^{\alpha}}=\left(0, \frac{|y+u|}{(|x|+|y|+|u|)^{\alpha}},\frac{x^4}{(|x|+|y|+|u|)^{\alpha}},\frac{|u^4+u^3y+u^2y^2uy^3+y^4|}{(|x|+|y|+|u|)^{\alpha}}\right),
 \]
so
\[
\frac{\|\tilde{\Delta}_{f_t}(x,y,-y)\|}{(|x|+2|y|)^{\alpha}}=(0,0,\frac{x^4}{(|x|+2|y|)^{\alpha}}, \frac{y^4}{(|x|+2|y|)^{\alpha}}),
\]
which implies that $\mathcal{L}_0(\tilde{\Delta}f_0)=4$.

Otherwise, if $t\neq 0$, we have
\[
\frac{\|\tilde{\Delta}{f_t}(x,y,u)\|}{(|x|+|y|+|u|)^{\alpha}}=\left(0, \frac{|y+u|}{(|x|+|y|+|u|)^{\alpha}},\frac{x^4+tx(u^2+uy+y^2)}{(|x|+|y|+|u|)^{\alpha}},P\right),
\]
with
\[
P=\frac{|u^4+u^3y+u^2y^2uy^3+y^4+tx(u^2+uy+y^2)|}{(|x|+|y|+|u|)^{\alpha}}.
\]
In particular,
\[
\frac{\|\tilde{\Delta}{f_t}(x,y,-y)\|}{(|x|+2|y|)^{\alpha}}=\left(0,0,\frac{|x^4+txy^2|}{(|x|+2|y|)^{\alpha}}, \frac{|y^4+txy^2|}{(|x|+2|y|)^{\alpha}}\right).
\]
For $y=0$, this gives $\frac{\|\tilde{\Delta}{f_t}(x,0,0)\|}{(|x|)^{\alpha}}=\left(0,0,\frac{x^4}{(|x|)^{\alpha}},0\right)$, hence we must take
$\alpha=4$.
For $y\neq 0$, the last component vanishes along the curve $x= -\frac{y^2}{t}$. It follows that
\[
\frac{\|\tilde{\Delta}{f_t}(-\frac{y^2}{t},y,-y)\|}{(\frac{y^2}{|t|}+2|y|)^{\alpha}}=
%(0,0,\frac{\frac{y^8}{t^4}+t(-\frac{1}{t}y^2)y^2}{{(\frac{y^2}{|t|}+2|y|)^{\alpha}}},0)=
%(0,0,\frac{\frac{y^8}{t^4}-y^4}{{(\frac{y^2}{|t|}+2|y|)^{\alpha}}},0)=
\left(0,0,\frac{y^4(\frac{y^4-t^4}{t^4})}{(\frac{y^2}{|t|}+2|y|)^{\alpha}},0\right).
\]
Hence, for each representative $f_t$ such that $|y|<t$, we must take $\alpha=4$. Thus, the claim is proved.

However, For $|y|=t$, we have that $\|\tilde{\Delta}{f_t}(-\frac{y^2}{t},y,-y)\|=0$. It means that each choice of a representative $f_t$ along of this family depends of $t$, i.e., this family does not have uniform isolated singularity.}

\end{example}

\section{$C^0$-Finite determinacy}
As we have seen, an analytic map germ $f\colon (\mathbb{R}^n,0)\to  (\mathbb{R}^p,0) \ (n\leq p)$ with isolated singularity does not have, in general, the finite determinacy property. Inspired on the question in Wall's paper (see \cite{Wall}), in this section we consider the $C^0$-finite determinacy. 
For analytic function germs $h\colon(\R^n,0)\to\R$, it is well known that isolated singularity implies $C^0$-finite determinacy (see Kuo \cite{Kuo}) and there are many papers related to find estimates for the degree of $C^0$-finite determinacy (see for instance \cite{BA1}). In this section, we show that, for real analytic maps germs $f$ from $\mathbb{R}^2$ to $\mathbb{R}^4$, isolated singularity implies $C^0$-finite determinacy (Theorem \ref{finitedeterminacy}). The steps to prove this are the following: For maps $f$ from $\mathbb{R}^n$ to $\mathbb{R}^p$, $n\leq p$, we establish a condition for a given unfolding of $f$ to have uniform isolated singularity and a Milnor-Fukuda radius of $f_t$ that does not depend of $t$ (Proposition \ref{uniformzerop} and Proposition \ref{regularvalue}). In particular, we obtain that this unfolding, for maps from $\mathbb{R}^2$ to $\mathbb{R}^4$, has constant knot type.

\medskip
Consider $\mathscr E_{n+1}$ the local ring of analytic function germs $(\R\times \R^n,0)\to\R$ and $\mathscr E_{n+1}^p$ the $\mathscr E_{n+1}$-module of analytic map germs $(\R\times\R^n,0)\to\R^p$. We also denote by $\mathcal{M}_n$ the  ideal of $\mathscr E_{n+1}$ generated by $x_1,\dots,x_n$. The following lemma is probably well known, but we include here a proof for completeness. It says that if $g_t$ is a 1-parameter family of map germs, such that $g_0$ has isolated zero at the origin, then the Lojasiewicz exponent of $g_0$ controls the fact that the family has uniform isolated zero at the origin.

\begin{lemma}\label{uniformzero}
Let $G\colon(\mathbb{R}\times \mathbb{R}^n,0)\rightarrow (\mathbb{R}\times \mathbb{R}^p,0)$, $n\leq p$, be a $1$-parameter family given by $G(t,x)=(t,g_t(x))$ and such that $g_0^{-1}(0)=\{0\}$.
Suppose that $g_t-g_0 \in \mathcal{M}_n^{[\alpha_0]+1}\mathscr E_{n+1}^p$, where $\alpha_0$ is a Lojasiewicz exponent of $g_0$ and  $[\alpha_0]$ is the nearest integer less than $\alpha_0$. Then, there exists a representative $G\colon(-\delta,\delta)\times \mathcal{U} \rightarrow (\delta,\delta)\times \mathbb{R}^p$ such that  $g_t^{-1}(0)=\{0\}$, for all $t \in (-\delta,\delta)$.
\end{lemma}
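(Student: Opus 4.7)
The plan is to establish a uniform Lojasiewicz-type lower bound for $g_t$ by perturbing the one available for $g_0$, using both the high $x$-vanishing order of $g_t-g_0$ and the fact that this difference vanishes on the slice $\{t=0\}$.

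First I would invoke the Lojasiewicz inequality for $g_0$: because $g_0^{-1}(0)=\{0\}$ and $\alpha_0$ is a Lojasiewicz exponent of $g_0$, there exist $C,\epsilon_1>0$ with $\|g_0(x)\|\geq C\|x\|^{\alpha_0}$ for $\|x\|<\epsilon_1$. Next, since $g_t(x)-g_0(x)$ lies in $\mathcal{M}_n^{[\alpha_0]+1}\mathscr E_{n+1}^p$ and vanishes identically on $\{t=0\}$, the fundamental theorem of calculus provides the factorization
\[
g_t(x)-g_0(x)=t\int_0^1 \frac{\partial g_s}{\partial s}(x)\Big|_{s=rt}\,dr=:t\,\psi(t,x),
\]
and $\psi$ remains in $\mathcal{M}_n^{[\alpha_0]+1}\mathscr E_{n+1}^p$, because $x$-vanishing of order $[\alpha_0]+1$ is preserved by differentiation in $t$ and by integration in $r$.

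Then I would bound $\psi$ uniformly. Writing $\psi(t,x)=\sum_{|\beta|=[\alpha_0]+1}x^\beta h_\beta(t,x)$ with $h_\beta\in\mathscr E_{n+1}$, and bounding the finitely many $h_\beta$ on a compact polydisk $[-\delta_0,\delta_0]\times\overline{B(0,\epsilon_2)}$, I obtain $M>0$ with
\[
\|g_t(x)-g_0(x)\|\leq |t|\,M\,\|x\|^{[\alpha_0]+1}
\]
throughout the polydisk. The reverse triangle inequality then yields, for $\|x\|\leq 1$,
\[
\|g_t(x)\|\geq C\|x\|^{\alpha_0}-|t|\,M\,\|x\|^{[\alpha_0]+1}\geq \|x\|^{\alpha_0}\bigl(C-|t|M\bigr),
\]
using $[\alpha_0]+1-\alpha_0\geq 0$. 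Choosing $\delta<\min\{\delta_0,\,C/(2M)\}$ and $\mathcal U=B(0,\min\{\epsilon_1,\epsilon_2,1\})$ forces $\|g_t(x)\|>0$ on $\mathcal U\setminus\{0\}$ uniformly for $|t|<\delta$, which is the desired conclusion.

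The main subtlety I expect is the factorization step, in particular inheriting the $x$-vanishing order $[\alpha_0]+1$ for $\psi$; once this is in hand, the remainder is routine bookkeeping of compactness bounds. The extra factor of $|t|$ is what makes the argument robust to the integer-versus-non-integer distinction between $[\alpha_0]+1$ and $\alpha_0$: without it one would need the strict inequality $[\alpha_0]+1>\alpha_0$, which is not obviously guaranteed by the stated definition of $[\alpha_0]$.
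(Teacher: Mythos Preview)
Your proposal is correct and follows essentially the same route as the paper: factor $g_t-g_0=t\,h_t$ with $h_t\in\mathcal{M}_n^{[\alpha_0]+1}\mathscr E_{n+1}^p$, expand $h_t$ in the monomial generators $x^\beta$ and bound the coefficients by compactness, then apply the reverse triangle inequality and shrink $\delta$ below $C/(2M)$. The only cosmetic difference is that the paper simply asserts the factorization $g_t=g_0+th_t$ with $h_t$ of the required $x$-order, whereas you justify it via the integral formula; both lead to the same estimate.
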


\begin{proof}
Let $m=[\alpha_0]+1$ and write  $g_t(x)=g_0(x)+th_t(x)$, so that $h_t\in \mathcal{M}_n^{m}\mathscr E_{n+1}^p$. We first claim that there exist a representative $G\colon (-\delta,\delta)\times \mathcal{U}\rightarrow (-\delta,\delta)\times \mathbb{R}^p$ and $B>0$ such that $\|h_t(x)\|\leq B\|x\|^{\alpha_0}$, $t \in (-\delta,\delta)$, $x\in\mathcal U$.

In fact, $\mathcal{M}_n^{m}$ is generated by the monomials $x^\beta=x_1^{\beta_1}\dots x_n^{\beta_n}$, with $\beta=(\beta_1,\dots,\beta_n)$ such that $|\beta|=\beta_1+\dots+\beta_n=m$. Hence, we can write
\[
h_t(x)=\sum_{|\beta|=m}x^\beta H_\beta(x,t),
\]
for some $H_\beta\in\mathscr E_{n+1}^p$. We fix representatives in $(-\delta',\delta')\times \mathcal{U}'$ and take $0<\delta<\delta'$ and $\mathcal U=B_\epsilon(0)$ such that $\overline B_\epsilon(0)\subset \mathcal U'$. If $x=0$, the inequality is immediate. Let $x\ne0$ and put $r=\|x\|$ and $u=x/\|x\|$. We have
\[
\|h_t(x)\|\le r^m\sum_{|\beta|=m}\|u^\beta H_\beta(ru,t)\|\le B r^{\alpha_0},
\]
where $B=\sup\{\sum_{|\beta|=m}\|u^\beta H_\beta(ru,t)\|:\ u\in \mathbb S^{n-1},\ r\in[0,\epsilon],\ t\in[-\delta,\delta]\}$. This concludes the proof of the claim.

Now we can finish the proof of the lemma. Fix a representative $G\colon (-\delta,\delta)\times \mathcal{U}\rightarrow (-\delta,\delta)\times \mathbb{R}^p$
such that $\|g_0(x)\| \geq C\|x\|^{\alpha_0}$ and $\|h_t(x)\|\leq B\|x\|^{\alpha_0}$, for all $(t,x) \in (-\delta,\delta)\times\mathcal U$. Moreover, we can assume that $0<\delta<\frac{C}{2B}$. For all $(t,x) \in (-\delta,\delta)\times\mathcal U$,
\[
\|g_t(x)\|=\|g_0(x)+th_t(x)\| \geq \|g_0(x)\|-|t|\|h_t(x)\| \geq (C-|t|B)\|x\|^{\alpha_0}>\frac{C}{2}\|x\|^{\alpha_0}.
\]
Hence,  $g_t^{-1}(0)=\{0\}$, for all $t \in (-\delta,\delta)$.	
\end{proof}

We use Lemma \ref{uniformzero} to show that if $f_t$ is a 1-parameter family of map germs such that $f_0$ has isolated singularity, then the isolated singularity exponent $\mathcal{L}_0(\tilde{\Delta}f_0)$ controls that $f_t$ has uniform isolated singularity.

\begin{proposition}[Uniform isolated singularity]\label{uniformzerop}
Let $F\colon(\mathbb{R}\times \mathbb{R}^n,0)\rightarrow (\mathbb{R}\times \mathbb{R}^p,0)$ be a family $F(t,x)=(t,f_t(x))$ such that $f_0$ has isolated singularity. Assume that 
$f_t-f_0 \in \mathcal{M}_n^k\mathscr{E}_{n+1}^p$, where $k\geq [\mathcal{L}_0(\tilde{\Delta}f_0)]+2$. Then, the family has uniform isolated singularity.
\end{proposition}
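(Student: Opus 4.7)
The plan is to reduce to Lemma \ref{uniformzero} applied not to $F$ itself, but to the ``doubled'' family
\[
G\colon(\mathbb{R}\times\mathbb{R}^{2n},0)\to(\mathbb{R}\times\mathbb{R}^{p+r},0),\qquad G(t,(x,x'))=(t,\tilde{\Delta}f_t(x,x')).
\]
By Proposition \ref{is} the hypothesis $f_0$ has isolated singularity is equivalent to $V(\tilde{\Delta}f_0)=\{0\}$, and by Remark \ref{MatrixMond} the Lojasiewicz inequality for $\tilde{\Delta}f_0$ takes precisely the form required by Lemma \ref{uniformzero}, with Lojasiewicz exponent $\alpha_0:=\mathcal{L}_0(\tilde{\Delta}f_0)$. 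So everything hinges on verifying the ideal condition $\tilde{\Delta}f_t-\tilde{\Delta}f_0\in \mathcal{M}_{2n}^{[\alpha_0]+1}\mathscr{E}_{2n+1}^{p+r}$.

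To check this, write $\phi_t=f_t-f_0\in\mathcal{M}_n^k\mathscr{E}_{n+1}^p$. The first $p$ components of $\tilde{\Delta}f_t-\tilde{\Delta}f_0$ are $\phi_t(x')-\phi_t(x)$, which lies in $\mathcal{M}_{2n}^k\mathscr{E}_{2n+1}^p$ because each of $\phi_t(x)$ and $\phi_t(x')$ does. For the $r$ minor-components, I would select the matrix $\alpha$ through the Hadamard-type formula
\[
\alpha_t(x,x')=\int_0^1 Df_t(x+s(x'-x))\,ds,
\]
so that $\alpha_t-\alpha_0=\int_0^1 D\phi_t(x+s(x'-x))\,ds$ has entries in $\mathcal{M}_{2n}^{k-1}$ (since $D\phi_t$ has entries in $\mathcal{M}_n^{k-1}$, and the substitution $x\mapsto x+s(x'-x)$ sends $\mathcal{M}_n$ into $\mathcal{M}_{2n}$). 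Expanding each $n\times n$ minor of $\alpha_t=\alpha_0+\beta_t$ and subtracting the corresponding minor of $\alpha_0$ gives a sum of $n$-fold products in which at least one factor is an entry of $\beta_t$, hence the difference lies in $\mathcal{M}_{2n}^{k-1}$. Since $k\geq[\alpha_0]+2$, we have $k-1\geq[\alpha_0]+1$, so both contributions sit in $\mathcal{M}_{2n}^{[\alpha_0]+1}$.

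Lemma \ref{uniformzero} then supplies $\delta>0$ and a neighbourhood $\tilde{\mathcal{U}}$ of $0$ in $\mathbb{R}^{2n}$ such that $\tilde{\Delta}f_t^{-1}(0)\cap\tilde{\mathcal{U}}=\{0\}$ for every $t\in(-\delta,\delta)$. Picking $\mathcal{U}\subset\mathbb{R}^n$ with $\mathcal{U}\times\mathcal{U}\subset\tilde{\mathcal{U}}$ and reading off the two cases of Remark \ref{MatrixMond}, the off-diagonal points give uniform injectivity of $f_t$ on $\mathcal{U}$, and the diagonal points give uniform immersivity on $\mathcal{U}\setminus\{0\}$, which is precisely uniform isolated singularity. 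The main obstacle is the bookkeeping in the minor computation: one must ensure that the Hadamard integral honestly produces an $\mathscr{E}_{2n+1}$-matrix with entries of the prescribed order, and that determinantal expansion of $\alpha_0+\beta_t$ only loses one order in the ideal filtration, so that the threshold $k\geq[\alpha_0]+2$ is exactly what the final application of Lemma \ref{uniformzero} demands.
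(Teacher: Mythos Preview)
Your proof is correct and follows essentially the same route as the paper: reduce to Lemma \ref{uniformzero} for the doubled family $G(t,(x,x'))=(t,\tilde{\Delta}f_t(x,x'))$, verify that the first $p$ components of $\tilde{\Delta}f_t-\tilde{\Delta}f_0$ lie in $\mathcal{M}_{2n}^{k}$ and the minor-components in $\mathcal{M}_{2n}^{k-1}$ via multilinear expansion of the determinant, then invoke $k-1\ge[\alpha_0]+1$. The only cosmetic difference is that you construct $\alpha_t$ explicitly through the Hadamard integral, whereas the paper argues more abstractly that a suitable $\beta_t$ with entries in $\mathcal{M}_{2n}^{k-1}$ exists; your version is arguably cleaner on this point.
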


\begin{proof}
We consider the map germ $\tilde \Delta F\colon(\R\times\mathbb{R}^{2n},0)\rightarrow (\mathbb{R}^{p+r},0)$ given by
\[\tilde 
\Delta F(t,x,x')=\tilde \Delta f_t(x,x')=(f_t(x')-f_t(x),D_{1t}(x,x'),\ldots,D_{rt}(x,x')).
\] 
Let $\alpha_0=\mathcal{L}_0(\tilde{\Delta}f_0)$. We need to show that  $\tilde \Delta f_t-\tilde \Delta f_0 \in \mathcal{M}_{2n}^{[\alpha_0]+1}\mathscr{E}_{2n+1}^{p+r}$. By Lemma \ref{uniformzero}, this implies that there exists a representative $\Delta F\colon (-\delta,\delta)\times \tilde{\mathcal{U}} \rightarrow (-\delta,\delta)\times \mathbb{R}^{p+r}$ such that $V(\tilde \Delta f_t)=\Delta f_t^{-1}(0)=\{0\}$. Thus, $f_t$ is injective and an immersion outside the origin, for all $t \in (-\delta,\delta)$.

Since $k> [\alpha_0]+1$, it is clear that $f_t(x')-f_t(x)-(f_0(x')-f_0(x)) \in\mathcal{M}_{2n}^{[\alpha_0]+1}\mathscr{E}_{2n+1}^p$. So, we only need to show that  ${D}_{it}-D_{i0}\in \mathcal{M}_{2n}^{[\alpha_0]+1}$, for all $i=1,2,\ldots,r$. By construction, $D_{it}$ are the $n$-minors of a $p\times n$ matrix $\alpha_t$ with entries in $\mathscr{E}_{2n+1}$ such that
\[
f_{t}(x')-f_{t}(x)=\alpha_{t}(x,x')(x'-x).
\]
Thus,
\[
f_t(x')-f_t(x)-(f_0(x')-f_0(x))=(\alpha_{t}(x,x')-\alpha_0(x',x))(x'-x)=t\beta_t(x',x)(x'-x),
\]
for some matrix $\beta_t$. Since the components of $f_t(x')-f_t(x)-(f_0(x')-f_0(x))$ are in $ \mathcal{M}_{2n}^k$, it follows that the entries of $\beta_t$ must be in $ \mathcal{M}_{2n}^{k-1}$. Take one of the minors $D_{it}$. Without loss of generality, we can assume that it is given by the first $n$ rows of $\alpha_t$. That is, $D_{it}=[v_{1t},\dots,v_{nt}]$, where $v_{jt}$ are the row vectors of $\alpha_t$. Then, $v_{jt}=v_{j0}+tw_{jt}$, where $w_{jt}$ are the row vectors of $\beta_t$. We have:
\begin{align*}
D_{it}&=[v_{1t},\dots,v_{nt}]=[v_{10}+tw_{1t},\dots,v_{n0}+tw_{nt}]\\
&=[v_{10},\dots,v_{n0}]+t\sum_{j=1}^n [v_{10},\dots,w_{jt},\dots,v_{n0}]+\dots+t^n [w_{1t},\dots,w_{nt}]\\
&=D_{i0}+t\sum_{j=1}^n [v_{10},\dots,w_{jt},\dots,v_{n0}]+\dots+t^n [w_{1t},\dots,w_{nt}].
\end{align*}
Therefore, $D_{it}-D_{i0} \in \mathcal{M}_{2n}^{k-1}\subset  \mathcal{M}_{2n}^{[\alpha_0]+1}$, since $k-1\ge [\alpha_0]+1$.
\end{proof}

The next step is to control that we can take a constant Milnor-Fukuda radius in the family  $f_t$. We recall that $\epsilon_0>0$ is a Milnor-Fukuda radius for a map germ $f$ if $f$ is transverse to all the spheres $S_\epsilon^{p-1}$, with  $0<\epsilon\le\epsilon_0$. This is equivalent to the fact that $\epsilon$ is a regular value of the function $\|f\|^2$,  for all $0<\epsilon\le\epsilon_0$. Again this fact will be controlled by the Lojasiewicz exponent of $\Grad \|f\|^2$, the gradient of $\|f\|^2$.

\begin{proposition}[Uniform regular value]\label{regularvalue}
Let $F\colon(\mathbb{R}\times \mathbb{R}^n,0)\rightarrow (\mathbb{R}\times \mathbb{R}^p,0)$ be a family $F(t,x)=(t,f_t(x))$ such that $f_0$ has isolated singularity. Let $\beta_0=\mathcal{L}_0(\Grad(\|f_0\|^2)$ be the Lojasiewicz exponent of $\Grad(\|f_0\|^2)$. Suppose that $f_t-f_0 \in \mathcal{M}_n^{k}\mathscr{E}_{n+1}^p$, with $k\geq [\beta_0]+1$. Then, there exists $\epsilon_0>0$ and a representative $F\colon(-\delta,\delta)\times\mathcal U\to (-\delta,\delta)\times\R^p$ such that $\epsilon$ is a regular value of $\|f_t\|^2$, for all $0<\epsilon\le\epsilon_0$ and for all $t\in(-\delta,\delta)$. 
\end{proposition}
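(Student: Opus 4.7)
The plan is to reduce to Lemma \ref{uniformzero}, applied to the $1$-parameter family of gradient maps
\[
g_t \;:=\; \Grad(\|f_t\|^2) \;=\; 2\,(Df_t)^T f_t,
\]
mirroring the strategy of Proposition \ref{uniformzerop}. Once one knows that $g_t$ has uniform isolated zero at the origin, any positive value is automatically a regular value of $\|f_t\|^2$ on the corresponding neighborhood.

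First I would check that $g_0$ has isolated zero at the origin, so that $\beta_0$ is finite. This follows from the hypothesis that $f_0$ has isolated singularity via the cone structure theorem (Theorem \ref{conestruct}): the existence of a Milnor-Fukuda radius for $f_0$ is exactly the statement that $\|f_0\|^2$ has no critical points in a punctured neighborhood of $0$, i.e.\ $g_0^{-1}(0)=\{0\}$ there.

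The key computation is to show that $g_t - g_0 \in \mathcal{M}_n^{[\beta_0]+1}\mathscr{E}_{n+1}^n$. Writing $f_t = f_0 + t\,h_t$ with $h_t\in\mathcal{M}_n^k\mathscr{E}_{n+1}^p$, one expands
\[
\|f_t\|^2 \;=\; \|f_0\|^2 + 2t\langle f_0,h_t\rangle + t^2\|h_t\|^2,
\]
hence
\[
g_t - g_0 \;=\; 2t\,\nabla_x\langle f_0,h_t\rangle + t^2\,\nabla_x\|h_t\|^2.
\]
Since $f_0 \in \mathcal{M}_n\mathscr{E}_n^p$, the product $\langle f_0,h_t\rangle$ lies in $\mathcal{M}_n^{k+1}$, so its $x$-gradient lies in $\mathcal{M}_n^{k}$; likewise $\|h_t\|^2\in\mathcal{M}_n^{2k}$ and its $x$-gradient lies in $\mathcal{M}_n^{2k-1}$. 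Under the hypothesis $k\geq [\beta_0]+1$ both exponents are at least $[\beta_0]+1$, giving the desired inclusion.

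Finally, Lemma \ref{uniformzero} applied to the family $(t,x)\mapsto(t,g_t(x))$ furnishes $\delta>0$ and a neighborhood $\mathcal{U}$ of $0$ on which $g_t^{-1}(0)=\{0\}$ for every $t\in(-\delta,\delta)$. Consequently the only critical value of $\|f_t\|^2|_\mathcal{U}$ is $0$, so every positive $\epsilon$ is a regular value; one then chooses $\epsilon_0>0$ (after possibly shrinking $\mathcal{U}$ and $\delta$ by a routine compactness argument using that $\|f_0\|^2>0$ on $\partial\mathcal{U}$ and the uniform continuity of $f_t$ in $(t,x)$) small enough that the level sets $\|f_t\|^{-2}(\epsilon)$ remain inside $\mathcal{U}$ for $\epsilon\leq\epsilon_0$ and $t\in(-\delta,\delta)$. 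I expect no real obstacle beyond this technicality; the heart of the argument is that multiplication by the vanishing factor $f_0$ buys one extra order of flatness in $x$, which is precisely what allows the $[\beta_0]+1$ threshold from Lemma \ref{uniformzero} to be met.
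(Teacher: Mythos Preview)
Your proposal is correct and follows essentially the same route as the paper: verify that $\Grad(\|f_0\|^2)$ has an isolated zero, show that the family of gradients satisfies the order hypothesis of Lemma~\ref{uniformzero}, and conclude. Your expansion of $\|f_t\|^2$ and the ensuing order count for $g_t-g_0$ agree exactly with the paper's component-by-component computation.

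The one genuine difference is in the first step. You appeal to the cone structure theorem to argue that $\Grad(\|f_0\|^2)$ vanishes only at the origin; note that Theorem~\ref{conestruct} as stated is only for $n=2$, $p=4$, while the proposition is formulated for general $n\le p$, so you would need to cite the general Fukuda theorem alluded to just before it. The paper instead gives a direct, self-contained argument via the curve selection lemma: if $\Grad(\|f_0\|^2)$ vanished along a nontrivial analytic arc through $0$, then $\|f_0\|^2$ would be constant (hence zero) along that arc, contradicting injectivity of $f_0$. This is slightly more elementary and does not require importing the full cone structure machinery.
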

\begin{proof}
Let $g_t=\|f_t\|^2$. We first show that $0$ is an isolated critical point of $g_0$. Suppose this is not true.  Then, by the curve selection lemma, there exists a non constant analytic arc $\alpha\colon(-\delta,\delta)\rightarrow \mathbb{R}^n$, with $\alpha(0)=0$, such that $\frac{\partial g}{\partial x_i}(\alpha(s))=0$, for all $i=1\ldots,n$. Thus,
\[
 (g\circ\alpha)'(s)=\frac{\partial g}{\partial x_1}(\alpha(s))\alpha_1'(s)+\ldots+\frac{\partial g}{\partial x_n}(\alpha(s))\alpha_n'(s)=0,
\]
for all $s\in(-\delta,\delta)$. Hence $g\circ\alpha=0$ and $f\circ\alpha=0$, but this gives a contradiction, because $f^{-1}(0)=\{0\}$.

Now, we use Lemma \ref{uniformzero} for the family $H(t,x)=(t,h_t(x))$, with $h_t=\Grad g_t$.
We show that $h_t-h_0 \in \mathcal{M}_n^{[\beta_0]+1}\mathscr{E}_{n+1}^p$. 
Write $f_t=f_0+t\beta_t$ for some $\beta_t\in \mathcal{M}_n^{k}\mathscr{E}_{n+1}^p$.
Take a component of $h_{it}$ of $h_t$. We have:
\begin{align*}
h_{it}&=\frac{\partial g_t}{\partial x_i}=2\sum_{j=1}^n f_{jt}\frac{\partial f_{jt}}{\partial x_i}\\
&=2\sum_{j=1}^n (f_{j0}+t \beta_{jt})\left(\frac{\partial f_{j0}}{\partial x_i}+t\frac{\partial \beta_{jt}}{\partial x_i}\right)\\
&=h_{i0}+2t\left(\sum_{j=1}^n f_{j0}\frac{\partial \beta_{jt}}{\partial x_i}+\sum_{j=1}^n \beta_{jt}\frac{\partial f_{j0}}{\partial x_i}\right)+2t^2\sum_{j=1}^n \beta_{jt}\frac{\partial \beta_{jt}}{\partial x_i}.
\end{align*}
Thus, $h_t-h_0 \in \mathcal{M}_n^{k}\mathscr{E}_{n+1}^p\subset \mathcal{M}_n^{[\beta_0]+1}\mathscr{E}_{n+1}^p$, since $k \ge [\beta_0]+1$. By Lemma \ref{uniformzero}, there exists a representative $H\colon(-\delta,\delta)\times \mathcal{U} \rightarrow(-\delta,\delta)\times \mathbb{R}^n$ such that $h_t^{-1}(0)=\{0\}$, for all $t \in (-\delta,\delta)$. In other words, $g_t$ has only a critical point at the origin, for all $t \in (-\delta,\delta)$. In particular, $0$ is the only critical value of $g_t$, for all $t \in (-\delta,\delta)$.
\end{proof}

Finally, we arrive to the main result of this section, which proves that any analytic map germ $f\colon(\mathbb{R}^{{2}},0) \rightarrow (\mathbb{R}^{{4}},0)$ with isolated singularity is $C^0$-finitely determined. In fact, the theorem gives an estimate of the degree of $C^0$-finite determinacy in terms of the Lojasiewicz exponents.

\begin{theorem}\label{finitedeterminacy}
Let $f\colon(\mathbb{R}^{{2}},0) \rightarrow (\mathbb{R}^{{4}},0)$ be an analytic map germ with isolated singularity. Let $k \geq \max\{[\mathcal{L}_0(\tilde{\Delta}f)]+2, [\mathcal{L}_0(\Grad(\|f\|^2))]+1\}$. Then, $f$ is $C^0$-$k$-determined.
\end{theorem}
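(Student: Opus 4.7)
The plan is to prove $C^0$-$k$-determinacy by a path argument. For any analytic germ $g\colon(\R^2,0)\to(\R^4,0)$ with $j^kg(0)=j^kf(0)$, I would form the linear homotopy $f_t=f+t(g-f)$, $t\in[0,1]$. The jet hypothesis gives $g-f\in\mathcal{M}_2^{k+1}\mathscr{E}_2^4$, so the $1$-parameter family $F(t,x)=(t,f_t(x))$ meets the order hypotheses of Propositions \ref{uniformzerop} and \ref{regularvalue} with room to spare. These propositions as stated yield uniform isolated singularity and a common Milnor-Fukuda radius only for $t\in(-\delta,\delta)$; my second step is to upgrade this to $t\in[0,1]$. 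To do so I would revisit the proof of Lemma \ref{uniformzero}: since $t(g-f)$ vanishes to order $k+1>\alpha_0:=\mathcal{L}_0(\tilde{\Delta}f)$, its effective size on a ball $B_\rho(0)$ is bounded by $B\rho^{k+1-\alpha_0}\|x\|^{\alpha_0}$, whose leading coefficient tends to $0$ with $\rho$. Shrinking $\rho$ therefore allows the inequality in Lemma \ref{uniformzero} to hold for any preassigned $\delta$, in particular $\delta>1$; the same trick applied via Proposition \ref{regularvalue} produces a common Milnor-Fukuda radius $\epsilon_0>0$ valid for every $f_t$ with $t\in[0,1]$.

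Now fix $\epsilon\in(0,\epsilon_0)$ and consider $\Phi(t,x)=\|f_t(x)\|^2$ on the common representative $[0,1]\times\mathcal U$. By construction $\epsilon$ is a regular value of $\Phi$, so $\Phi^{-1}(\epsilon)$ is a smooth surface and the projection $\pi\colon\Phi^{-1}(\epsilon)\to[0,1]$ is a proper submersion whose fibre over $t$ is the smooth circle $\tilde S^1_{\epsilon,t}=f_t^{-1}(S^3_\epsilon)$. Ehresmann's fibration theorem trivialises this $S^1$-bundle over the contractible base $[0,1]$, producing smooth diffeomorphisms $\varphi_t\colon\tilde S^1_{\epsilon,0}\to\tilde S^1_{\epsilon,t}$; the compositions $f_t\circ\varphi_t$ form a smooth isotopy of embeddings into $S^3_\epsilon$, and the ambient isotopy extension theorem shows the knots $K_t=f_t(\tilde S^1_{\epsilon,t})$ to be all equivalent in $S^3_\epsilon$. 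Hence $K(f)\cong K(g)$, and the implication $(2)\Rightarrow(1)$ of Theorem \ref{equivalence} delivers the $C^0$-$\mathscr A$-equivalence of $f$ and $g$, completing the proof.

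The main obstacle is the uniform extension from $(-\delta,\delta)$ to $[0,1]$ in the second step: the statements of Propositions \ref{uniformzerop} and \ref{regularvalue} do not directly cover the full homotopy, and one must inspect the proof of Lemma \ref{uniformzero} to trade a small parameter range for a small spatial neighbourhood. Once this uniform control is in hand, the remainder of the argument is a standard combination of proper-submersion trivialisation, the ambient isotopy theorem, and the knot-type classification from Section $3$.
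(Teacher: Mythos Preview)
Your proposal is correct and follows essentially the same strategy as the paper: a one-parameter path between $f$ and $g$, Propositions \ref{uniformzerop} and \ref{regularvalue} to obtain uniform isolated singularity and a common Milnor--Fukuda radius, constancy of the link knot along the family, and Theorem \ref{equivalence} to conclude. The only differences are in implementation: the paper routes both $f$ and $g$ through the common polynomial jet $j^kf(0)$ rather than using your direct homotopy $f+t(g-f)$, covers the full parameter range by re-applying the propositions locally around each $t_0$ and invoking connectedness of $\R$ rather than your one-shot domain-shrinking to enlarge $\delta$, and cites \cite[Theorem~4.4]{MB} for the knot isotopy where you argue via Ehresmann and the isotopy extension theorem.
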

\begin{proof}
Given an analytic map $g\colon(\mathbb{R}^{{2}},0) \rightarrow (\mathbb{R}^{{4}},0)$ such that $j^kg(0)=j^kf(0)$, we write $g=j^kf(0)+h$, where  $h \in \mathcal{M}_2^{k+1}\mathscr{E}_2^4$ and consider the family $G(t,x)=(t,g_t(x))$ given by $g_t(x)=j^kf(0)(x)+th(x)$. Fix a parameter $t_0\in\R$, by Proposition 
\ref{uniformzerop} and Proposition \ref{regularvalue}, there exists a representative $G\colon(t_0-\delta,t_0+\delta)\times \mathcal{U}\rightarrow (t_0-\delta,t_0+\delta)\times \R^4$ such that $g_t$ is injective and an immersion outside the origin and there exists $\epsilon_0>0$ such that for all $0<\epsilon \leq \epsilon_0$, $\epsilon$ is a regular value of each $g_t$, for all $t \in (t_0-\delta,t_0+\delta)$. This implies that $\epsilon_0$ is a Milnor-Fukuda radius for each map $g_t$, with $t \in (t_0-\delta,t_0+\delta)$ (see Theorem \ref{conestruct}). 

Now, the map $G$ defines an isotopy between the family of knots 
$K(g_t)=g_t(\mathcal{U})\cap {S}^3_{\epsilon_0}$ (see \cite[Theorem 4.4]{MB}). Hence, by Theorem \ref{equivalence}, all the map germs $g_t$ with $t \in (t_0-\delta,t_0+\delta)$ are $C^0$-$\mathscr A$-equivalent. Since $\R$ is connected, this implies that all the map germs $g_t$ with $t \in\R$ are $C^0$-$\mathscr A$-equivalent. In particular, $g$ and $f$ are $C^0$-$\mathscr A$-equivalent.
\end{proof}

\section{The double point exponent}
  Let $f\colon(\mathbb{R}^n,0) \rightarrow (\mathbb{R}^p,0), \ (n\leq p)$ be an analytic map germ. Let us consider the map germ  $\Delta f\colon(\mathbb{R}^{2n},0)\rightarrow \mathbb{R}^p, \ \Delta f(x,x')=f(x)-f(x') $. Let $\alpha_0$ be the \emph{Lojasiewicz exponent} of the map $\Delta f$. Then, there exists positive constants $C, \epsilon, \alpha$ such that the following \emph{Lojasiewicz inequality} holds:
\begin{equation}\label{loja}
\|\Delta f(x,x')\| \geq C\dist((x,x'), V(\Delta f))^{\alpha},
\end{equation}
for all $\|(x,x')\| < \epsilon$, and $\alpha_0$ is the infimum of the exponents $\alpha$ such that the inequality (\ref{loja}) is true. Notice that if $f$ is a injective map, we have that $V(\Delta f))=\{(x,x); x \in \mathcal{U}\}$, for some representative. In this case, the inequality (\ref{loja}) looks as follows:
\begin{equation}\label{loja2}
 \|f(x)-f(x')\| \geq C\|x-x'\|^{\alpha},
\end{equation}
for all $\|(x,x')\| < \epsilon$.

\begin{definition}
\emph{Given an analytic map germ $f\colon(\mathbb{R}^n,0) \rightarrow (\mathbb{R}^p,0), \ (n\leq p)$, the infimum $\alpha_0$ of exponents $\alpha$ such that the inequality (\ref{loja2}) holds is called the \emph{double point exponent of $f$}. We write $\alpha_0=\mathcal{L}_0(\Delta f)$. }.
\end{definition}
\begin{remark}{\rm 
$\mathcal{L}_0(\Delta f)<\infty$ if and only if $f$ is  injective.}
\end{remark}

\begin{example}
{\rm Let $\gamma\colon(\mathbb{R},0) \rightarrow (\mathbb{R}^3,0)$ be a germ of an analytic arc with isolated singularity, where $\gamma$ is injective. Let $f\colon(\mathbb{R}^2,0)\rightarrow (\mathbb{R}^4,0)$ be the constant unfolding $f(x,y)=(x,\gamma(y))$. We have that $\tilde{\Delta} f(x,y,u)=(0,\gamma(u)-\gamma(y), \Delta_1\gamma(y,u))$, where $\Delta_1\gamma(y,u)=(\gamma(u)-\gamma(y))/(u-y)$. If $\tilde{\Delta}_f(x,y,u)=0$ then $u=y$ and hence, $0=\Delta_1\gamma(y,y)=\gamma'(y)$. Therefore, $y=u=0$. It means that $V(\tilde{\Delta} f)=\{(x,0,0)\}$, i.e., the non-immersive locus of $f$ is the $x$-axes. By Proposition \ref{is}, $\mathcal{L}_0(\tilde{\Delta}_f)=\infty$.

On the other hand, if $\gamma(y)$ contains at least a monomial of odd  degree $y^{2m+1}$, we have $\mathcal{L}_0({\Delta}_f)=2l$, where $2l+1$ is the smallest odd exponent of a monomial in $\gamma(y)$.}
\end{example}

\begin{lemma}
Let $f\colon(\mathbb{R}^n,0) \rightarrow (\mathbb{R}^p,0), \ (n\leq p)$ be analytic and injective.  Then $\alpha_0=\mathcal{L}_0(\Delta f) \geq 1$. In particular $f^{-1}$ is a $\frac{1}{\alpha_0}$ continuous H\"older map on the image.
\end{lemma}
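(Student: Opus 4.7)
The plan is to prove the inequality $\alpha_0\ge 1$ by contradiction, exploiting the fact that an analytic map germ is automatically locally Lipschitz, and then obtain the Hölder regularity of $f^{-1}$ by a direct rewriting of (\ref{loja2}).

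First I would fix a representative $f\colon\mathcal U\to\mathcal V$ on which the inequality $\|f(x)-f(x')\|\ge C\|x-x'\|^{\alpha}$ holds for some $\alpha>\alpha_0$ arbitrarily close to $\alpha_0$, with constants $C,\epsilon>0$. Since $f$ is analytic at $0$, its partial derivatives are bounded on a compact neighbourhood of $0$, so by the mean value theorem there exists $L>0$ (depending only on the representative) such that $\|f(x)-f(x')\|\le L\|x-x'\|$ for all $x,x'$ in a small ball about the origin. Suppose for contradiction that $\alpha_0<1$. Choose any exponent $\alpha$ with $\alpha_0<\alpha<1$ for which (\ref{loja2}) holds. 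Combining the two inequalities gives
\[
L\|x-x'\|\ \ge\ \|f(x)-f(x')\|\ \ge\ C\|x-x'\|^{\alpha},
\]
so $\|x-x'\|^{1-\alpha}\ge C/L$ whenever $x\ne x'$ and $\|(x,x')\|<\min\{\epsilon,\text{Lipschitz radius}\}$. Since $1-\alpha>0$, the left hand side tends to $0$ as $x'\to x$, which is a contradiction. Therefore $\alpha_0\ge 1$.

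For the Hölder statement, I would invoke the fact (standard in the real analytic setting, cf.\ the references \cite{BR,JG} cited earlier) that the Lojasiewicz exponent is attained: there exist $C>0$ and $\epsilon>0$ such that
\[
\|f(x)-f(x')\|\ \ge\ C\|x-x'\|^{\alpha_0}
\]
for all $\|(x,x')\|<\epsilon$. Solving for $\|x-x'\|$ gives
\[
\|x-x'\|\ \le\ C^{-1/\alpha_0}\,\|f(x)-f(x')\|^{1/\alpha_0}.
\]
Setting $y=f(x)$, $y'=f(x')$, and observing that $f$ is a bijection from a neighbourhood of $0$ in $\mathbb R^n$ onto its image (injectivity is the hypothesis), this reads
\[
\|f^{-1}(y)-f^{-1}(y')\|\ \le\ C^{-1/\alpha_0}\,\|y-y'\|^{1/\alpha_0},
\]
for $y,y'$ in a small neighbourhood of $0$ in $f(\mathcal U)$, which is the asserted $1/\alpha_0$-Hölder bound.

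The only delicate point is the contradiction step: one must be sure the Lipschitz estimate and the Lojasiewicz estimate hold simultaneously on a common ball, but this is routine since both are local and the Lipschitz constant of an analytic map on a compact neighbourhood of $0$ is finite. The attainment of the infimum in the Lojasiewicz inequality is classical in the analytic/semianalytic setting and is implicitly used throughout the paper, so no separate argument is required.
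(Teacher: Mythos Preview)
Your proof is correct and follows essentially the same approach as the paper: combine the local Lipschitz bound for an analytic map with the Lojasiewicz inequality and derive a contradiction from $\alpha_0<1$ via $\|x-x'\|^{1-\alpha}\to 0$, then read off the H\"older estimate for $f^{-1}$ directly from (\ref{loja2}). The only cosmetic difference is that you pick an intermediate exponent $\alpha\in(\alpha_0,1)$ while the paper works with $\alpha_0$ itself (implicitly using attainment, which you also invoke for the H\"older part); the argument is otherwise identical.
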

\begin{proof}
Since $f$ is analytic, it is locally Lipschitz near $0$. Thus, there exist constants $\tilde C,\delta_1>0$ such that
\[
\frac{\|f(x)-f(x')\|}{\|x-x'\|} \leq \tilde C,
\]
for all  $0<\|(x,x')\|< \delta_1$. On the other hand, there exist positive constants $C,\delta_2,\alpha_0$ such that 
\begin{equation}\label{eq:alpha}
\frac{\|f(x)-f(x')\|}{\|x-x'\|^{\alpha_0}}>C,
\end{equation}
for all $\|(x,x')\|< \delta_2$.  Let $\epsilon=\min\{\delta_1,\delta_2\}$. If $\alpha_0<1$, we can write
\[
\frac{\|f(x)-f(x')\|}{\|x-x'\|^{\alpha_0}}= \frac{\|f(x)-f(x')\|}{\|x-x'\|}\|x-x'\|^{1-\alpha_0}\to 0,
\]
when $\|(x,x')\| \to 0$. But this a contradiction with (\ref{eq:alpha}) and thus, we must have $\alpha_0 \geq 1$. 
\end{proof}

Let $f,g\colon (\mathbb{R}^n,0){ \rightarrow} (\mathbb{R}^p,0)$ be analytic map germs. Then $f$ and $g$ are called bi-Lipschitz equivalent if there exist bi-Lipschitz homeomorphisms $\phi\colon (\mathbb{R}^n,0) {\rightarrow }(\mathbb{R}^n,0)$ and $\psi\colon (\mathbb{R}^p,0) \to  (\mathbb{R}^p,0)$ such that $g=\psi\circ f\circ \phi^{-1}$. In the next lemma, we show that $\mathcal{L}_0(\Delta f)$ is a bi-Lipschitz invariant.

\begin{lemma}\label{analytic invariant}
\emph{If $f$ and $g$ are bi-Lipschitz equivalent then, $\mathcal{L}_0(\Delta f)=\mathcal{L}_0(\Delta g)$}.
\end{lemma}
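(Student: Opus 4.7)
The plan is to show that the Lojasiewicz-type inequality defining $\mathcal{L}_0(\Delta f)$ transfers to an analogous inequality for $g$ with \emph{the same exponent} (but different constants), using that the change-of-coordinates maps $\phi$ and $\psi$ are bi-Lipschitz. Since the bi-Lipschitz relation is symmetric, the same argument in the reverse direction will give the opposite inequality between exponents.

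Concretely, suppose $g=\psi\circ f\circ\phi^{-1}$ with bi-Lipschitz $\phi,\psi$ fixing the origins. On a small neighborhood of $0$ there are constants $0<a_1\le a_2$ and $0<b_1\le b_2$ such that
\[
a_1\|x-x'\|\le\|\phi(x)-\phi(x')\|\le a_2\|x-x'\|,\qquad b_1\|y-y'\|\le\|\psi(y)-\psi(y')\|\le b_2\|y-y'\|.
\]
In particular $\phi^{-1}$ sends a small ball into a small ball, so given $\alpha>\mathcal{L}_0(\Delta f)$ and a corresponding constant $C>0$ with $\|f(u)-f(u')\|\ge C\|u-u'\|^{\alpha}$ on a neighborhood of the origin, we can substitute $u=\phi^{-1}(x)$, $u'=\phi^{-1}(x')$ for $(x,x')$ close enough to $0$. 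The lower bi-Lipschitz bound for $\psi$ gives
\[
\|g(x)-g(x')\|\ge b_1\|f(\phi^{-1}(x))-f(\phi^{-1}(x'))\|\ge b_1 C\,\|\phi^{-1}(x)-\phi^{-1}(x')\|^{\alpha},
\]
and then the upper bi-Lipschitz bound $\|\phi^{-1}(x)-\phi^{-1}(x')\|\ge a_2^{-1}\|x-x'\|$ yields
\[
\|g(x)-g(x')\|\ge (b_1 C\,a_2^{-\alpha})\,\|x-x'\|^{\alpha}.
\]
Hence $\mathcal{L}_0(\Delta g)\le\alpha$, and letting $\alpha\downarrow\mathcal{L}_0(\Delta f)$ we conclude $\mathcal{L}_0(\Delta g)\le\mathcal{L}_0(\Delta f)$. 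Applying exactly the same reasoning to $f=\psi^{-1}\circ g\circ\phi$ gives the reverse inequality, and the equality follows.

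The main ``point'' of the argument, rather than any real obstacle, is the observation that raising a bi-Lipschitz comparison $\|u-u'\|\asymp\|x-x'\|$ to the power $\alpha$ only changes the constant in front of $\|x-x'\|^{\alpha}$, leaving $\alpha$ itself untouched; this is precisely why the bi-Lipschitz hypothesis (and not weaker $C^0$-equivalence, which could distort exponents, nor stronger analytic equivalence) is the natural one for preserving $\mathcal{L}_0(\Delta f)$. A small technical point that one should check along the way is that the substitution $u=\phi^{-1}(x)$ remains in the domain where the Lojasiewicz inequality for $f$ is valid, but this is immediate from $\phi^{-1}(0)=0$ together with the upper Lipschitz bound on $\phi^{-1}$.
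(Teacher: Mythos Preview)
Your proof is correct and follows essentially the same idea as the paper's: transfer the Lojasiewicz inequality through the bi-Lipschitz changes of coordinates, noting that only the constant changes while the exponent is preserved, and then use symmetry for the reverse inequality. The only organizational difference is that the paper treats the source change $f\circ\phi$ and the target change $\psi\circ f$ separately before composing, whereas you handle $\psi\circ f\circ\phi^{-1}$ in one step; your version is slightly more careful in taking $\alpha>\mathcal{L}_0(\Delta f)$ and passing to the infimum rather than working directly at $\alpha_0$.
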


\begin{proof}
First, it is immediate that $\mathcal{L}_0(\Delta f) = \infty$ if and only if $\mathcal{L}_0(\Delta g) = \infty$, so we can assume both numbers are finite. 
Let $\alpha_0 =\mathcal{L}_0(\Delta f)<\infty$ and assume that $g = f \circ \phi$, where $\phi\colon(\mathbb{R}^n,0)\rightarrow (\mathbb{R}^n,0)$ is a bi-Lipschitz homeomorphism.  We have:
\[
\|g(x)-g(x')\|=\|f \circ \phi(x)- f \circ \phi(x')\|\geq C\|\phi(x)-\phi(x')\|^{\alpha_0} \geq C\tilde C\|x-x'\|^{\alpha_0},
\]
for some positive constants $C, \tilde C$ and for all $\|(x,x')\|<\epsilon$. Thus, $\mathcal{L}_0(\Delta g) \leq \alpha_0=\mathcal{L}_0(\Delta f)$. Using the same argument for $f=g \circ \phi^{-1}$, we obtain   $\mathcal{L}_0(\Delta g) \geq\mathcal{L}_0(\Delta f)$. Hence,  $\mathcal{L}_0(\Delta g)=\mathcal{L}_0(\Delta f)$. 

On the other hand, assume that $g=\psi \circ f$, where $\psi\colon(\mathbb{R}^p,0)\rightarrow (\mathbb{R}^p,0)$ is a bi-Lipschitz homeomorphism. We have:
\[
\|g(x)-g(x')\|=\|\psi(f(x))-\psi(f(x'))\|\geq D\|f(x)-f(x')\| \geq D \tilde D \|x-x'\|^{\alpha_0},
\]
 for some positive constants $D, \tilde D$ and for all $\|(x,x')\|<\epsilon$. Thus, $\mathcal{L}_0(\Delta g) \leq \mathcal{L}_0(\Delta f)$ and, in a similar way (by considering $\psi^{-1} \circ g$), we obtain  $\mathcal{L}_0(\Delta g) \geq \mathcal{L}_0(\Delta f)$, so $\mathcal{L}_0(\Delta g)=\mathcal{L}_0(\Delta f)$. 
 
 Now, if $ g=\psi \circ f \circ \phi^{-1}$, then $\mathcal{L}_0(\Delta g)=\mathcal{L}_0(\Delta {f \circ \phi})=\mathcal{L}_0(\Delta f)$.
\end{proof}

%(\it{\textcolor{red}{A Lipschitz embedding is smooth}})

The main result of this section is that any Lipschitz embedding is a smooth embedding.

\begin{theorem}\label{embedding}
Let $f\colon(\mathbb{R}^n,0) \rightarrow (\mathbb{R}^p,0)$, $n \leq p$, be analytic and injective. Then, the following assertions are equivalent:
\begin{enumerate}
\item $f$ is a bi-Lipschitz map on the image;
\item $\mathcal{L}_0(\Delta f)=1$;
\item $f$ is a smooth embedding.
\end{enumerate}
\end{theorem}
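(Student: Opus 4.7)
I will prove the cyclic chain $(3)\Rightarrow(1)\Rightarrow(2)\Rightarrow(3)$. The first two implications are essentially bookkeeping, while $(2)\Rightarrow(3)$ carries the substance and requires a standard fact about Lojasiewicz exponents of analytic germs.

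For $(3)\Rightarrow(1)$, a smooth embedding germ admits a representative $f\colon\mathcal U\to\R^p$ which is an analytic diffeomorphism onto its image $f(\mathcal U)$; both $f$ and $f^{-1}\colon f(\mathcal U)\to\mathcal U$ are $C^1$, hence Lipschitz on compact neighbourhoods, giving the bi-Lipschitz property on the image. For $(1)\Rightarrow(2)$, the bi-Lipschitz hypothesis provides $C,\epsilon>0$ with $\|f(x)-f(x')\|\ge C\|x-x'\|$ whenever $\|(x,x')\|<\epsilon$, so the Lojasiewicz inequality~(\ref{loja2}) holds with exponent $\alpha=1$; thus $\mathcal L_0(\Delta f)\le 1$. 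Combined with the preceding lemma, which gives $\mathcal L_0(\Delta f)\ge 1$ for injective analytic germs, we obtain $\mathcal L_0(\Delta f)=1$.

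For $(2)\Rightarrow(3)$, I first invoke the classical fact that, for analytic map germs, the infimum defining the Lojasiewicz exponent is actually attained. Applied here, $\mathcal L_0(\Delta f)=1$ therefore furnishes positive constants $C,\epsilon$ with
\[
\|f(x)-f(x')\|\ge C\|x-x'\|, \qquad \|(x,x')\|<\epsilon.
\]
Setting $x'=0$ and $x=tv$ for a unit vector $v\in\R^n$ and $t>0$ small, dividing by $t$ and letting $t\to 0^+$, the left-hand side converges to $\|Df(0)\cdot v\|$, so
\[
\|Df(0)\cdot v\|\ge C\quad\text{for every unit }v.
\]
Hence $Df(0)$ is injective and $f$ is an immersion at $0$. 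By continuity of $Df$, the rank stays maximal on a neighbourhood of $0$, so some representative of $f$ is both an immersion and (by hypothesis) injective on a compact neighbourhood; the standard fact that an injective immersion from a compact manifold to a Hausdorff space is a topological embedding promotes $f$ to a smooth embedding germ.

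\textbf{Main obstacle.} The step I expect to be delicate is the use of attainment of the Lojasiewicz exponent in $(2)\Rightarrow(3)$. If one only knew~(\ref{loja2}) for every $\alpha>1$, the scaling $x=tv$, divide by $t$, $t\to 0$ argument would degenerate to the vacuous bound $\|Df(0)\cdot v\|\ge 0$ and give no injectivity information about the differential. The remedy is the theorem that for analytic germs the Lojasiewicz exponent is a rational number realised by a genuine inequality (Bochnak-Risler, H\"ormander); applied to $\Delta f$ this converts the hypothesis $\mathcal L_0(\Delta f)=1$ into the honest linear lower bound needed to drive the derivative argument.
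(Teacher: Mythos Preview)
Your proof is correct. The implications $(3)\Rightarrow(1)$ and $(1)\Rightarrow(2)$ match the paper's treatment (the paper simply declares $(1)\Leftrightarrow(2)$ and $(3)\Rightarrow(1)$ immediate). The substantive step $(2)\Rightarrow(3)$ is where you diverge: the paper argues by contradiction, assuming $0$ is non-immersive, placing $f$ in the corank-$r$ normal form $f(z,y)=(z,\tilde f(z,y))$ with $\tilde f\in\mathcal M_n^2$, and showing that along the $y$-directions the difference quotient $\|\tilde f(z,u)-\tilde f(z,y)\|/\|u-y\|$ is dominated by $\|\alpha(z,y,u)\|\to 0$, contradicting the linear lower bound. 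You instead go directly: specialise to $x'=0$, $x=tv$, divide by $t$, and pass to the limit to force $\|Df(0)v\|\ge C$ for every unit $v$. Your route is shorter and avoids the normal-form reduction; the paper's route makes the geometry of the failure (the corank directions) more visible but is otherwise equivalent. Both arguments rest on the same hidden ingredient, namely that the infimum $\mathcal L_0(\Delta f)=1$ is actually realised by a genuine inequality with exponent~$1$; you cite Bochnak--Risler for this, while the paper uses it tacitly. Either argument also carries over to the $C^1$ setting of the subsequent remark, since there the bi-Lipschitz hypothesis supplies the linear lower bound directly.
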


\begin{proof}
Notice that $(2) \Leftrightarrow (1)$ is immediate from the definition of $\mathcal{L}_0(\Delta f)$ and that $(3) \Rightarrow (1)$ is also clear. So, we only have to prove $(2) \Rightarrow (3)$. Assume that $0$ is a non-immersive point of $f$. Let $0<r\leq n$ be the corank of $f$ at $0$. After $\mathscr A$-equivalence, we may write
\begin{equation}\label{cc}
f(z,y)=(z,\tilde{f}(z,y)), \ z \in \mathbb{R}^{n-r}, \ y \in \mathbb{R}^r,
\end{equation}
where $\tilde{f}(z,y)=(f_{n-r+1}(z,y),\ldots f_p(z,y))$ and $f_{n+r+1},\dots,f_p\in\mathcal M_n^2$.

For each $f_i$, with $i=n-r+1, \ldots, p$, we write
\[
f_i(z,u)-f_i(z,y)=\sum_{j=1}^r \alpha_{ij}(z,y,u)(y_j-u_j),
\]
for some $\alpha_{ij} \in \mathcal{M}_{r+n}$. By using matrix notation $\alpha=(\alpha_{ij})$, we have
\[
\tilde f(z,u)-\tilde f(z,y)=\alpha(z,y,u)(u-y),
\]
with $\alpha(0)=0$. Let $\|\alpha\|$ be the matrix norm of $\alpha$. If $u\ne y$, then:
\begin{equation}\label{limit}
\frac{\|\tilde f(z,u)-\tilde f(z,y)\|}{\|u-y\|}=\frac{\|\alpha(z,y,u)(u-y)\|}{\|u-y\|}\le \frac{\|\alpha(z,y,u)\|\|(u-y)\|}{\|u-y\|}=\|\alpha(z,y,u)\|\to 0,
\end{equation}
when $\|(z,y,u)\|\to 0$.

On the other hand, assume that $\mathcal{L}_0(\Delta f)=1$. If $(z,y)\ne(z',u)$, then
\[
\frac{\|z'-z\|+\|\tilde f(z',u)-\tilde f(z,y)\|}{\|z'-z\|+\|u-y\|}\ge C,
\]
for some $C>0$ and for all $\|(z,y,z',u)\|<\epsilon$. In particular, for $z=z'$ and $y\ne u$,
\[
\frac{\|\tilde f(z,u)-\tilde f(z,y)\|}{\|u-y\|}\ge C,
\]
for all $\|(z,y,u)\|<\epsilon$. But this gives a contradiction with (\ref{limit}).
\end{proof}

\begin{corollary} Let $X\subset\R^p$ be an analytic subset. If $X$ locally parametrized at $x_0$ as the image of an analytic map which is bi-Lipschitz onto its image, then $X$ is smooth at $x_0$.
\end{corollary}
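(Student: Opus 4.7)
The corollary is essentially a reformulation of the implication $(1)\Rightarrow(3)$ in Theorem \ref{embedding} applied pointwise. My plan is the following.

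After translating, assume $x_0=0$. By hypothesis there exist an open neighborhood $V$ of $0$ in $\R^p$, an open neighborhood $U$ of $0$ in $\R^n$ (for some $n\le p$) and an analytic map $f\colon U\to V$ with $f(0)=0$, $f(U)=X\cap V$, and such that $f\colon U\to f(U)$ is a bi-Lipschitz homeomorphism. Passing to germs, we obtain an analytic map germ $f\colon(\R^n,0)\to(\R^p,0)$ which is injective and bi-Lipschitz onto its image, so condition $(1)$ of Theorem \ref{embedding} holds.

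By Theorem \ref{embedding}, condition $(3)$ holds as well, that is, $f$ is a smooth embedding. In particular, $0$ is an immersive point of $f$, i.e., the Jacobian matrix $Df(0)$ has maximal rank $n$. Since being immersive is an open condition, after possibly shrinking $U$ we may assume that $f$ is an immersion at every point of $U$. Combined with the fact that $f\colon U\to f(U)$ is a homeomorphism (inherited from the bi-Lipschitz property), this turns $f$ into a smooth embedding of the open set $U\subset\R^n$ into $\R^p$ in the classical differential-geometric sense.

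Therefore $X\cap V=f(U)$ is a smooth $n$-dimensional analytic submanifold of $\R^p$ in a neighborhood of $x_0$, i.e., $X$ is smooth at $x_0$. The only delicate step is the application of Theorem \ref{embedding}, which has already been carried out; everything else reduces to standard facts about immersions (openness of the immersive locus) and the definition of local smoothness of an analytic set as being, locally, the image of a smooth embedding.
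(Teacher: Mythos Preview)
Your proposal is correct. The paper states this corollary without proof, treating it as an immediate consequence of Theorem \ref{embedding}, and your argument is exactly the natural unpacking of that implication.
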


In the complex case, Birbrair, L\^e, Fernandes and Sampaio show in \cite{BFLS} that if a complex algebraic set $X\subset\C^n$ is Lipschitz regular at a point $x_0\in X$, then $X$ is smooth at $x_0$. This is false in the real case, for instance, the surface in $\R^3$ given by $x^3+y^3=z^3$ is Lipschitz regular at the origin, but it is not smooth (see \cite{tese-Sampaio}). Thus, our corollary can be seen as a weaker real version of that theorem.

\begin{remark}{\rm The same argument of the proof of Theorem \ref{embedding} works is $f$ not analytic, but is $C^1$. In that case, the exponent $\mathcal L_0(\Delta f)$ is not defined, but one can show that if $f$ is a $C^1$ map which is bi-Lipschitz onto its image, then $f$ is a $C^1$-embedding.}
\end{remark}

\bigskip

\noindent{\bf Acknowledgements}. We would like to thank Osamu Saeki and Carles Bivi\`a-Ausina by the extremely useful suggestions.

\end{document}